\documentclass[10pt]{amsart}
\usepackage{xypic}
\usepackage[pdftex,colorlinks=true,citecolor=blue,linkcolor=blue]{hyperref}
\usepackage{amssymb}
\newtheorem{theorem}{Theorem}[section]
\newtheorem{lemma}[theorem]{Lemma}

\newtheorem{definition}[theorem]{Definition}

\newtheorem{cnj}[theorem]{Conjecture}

\newcommand{\Z}{\mathbb{Z}}
\newcommand{\bH}{\mathbb{H}}
\newcommand{\Q}{\mathbb{Q}}
\newcommand{\R}{\mathbb{R}}
\newcommand{\F}{\mathbb{F}}
\newcommand{\PP}{\mathbb{P}}
\newcommand{\HH}{{\mathcal H}}
\newcommand{\cF}{\mathcal{F}}
\newcommand{\FF}{\cF_c}
\newcommand{\frob}{Frob}
\newcommand{\too}[1]{\mathop{\longrightarrow}\limits^{#1}}

\DeclareMathOperator{\GL}{GL}
\DeclareMathOperator{\SL}{SL}
\DeclareMathOperator{\diag}{diag}
\DeclareMathOperator{\Stab}{Stab}
\DeclareMathOperator{\Ind}{Ind}

\DeclareMathOperator{\tr}{Tr}
\DeclareMathOperator{\Hom}{Hom}

\newcommand{\isom}{\cong}
\date{May 14, 2012}
\title{Reducible Galois representations and the homology of $\GL(3,\Z)$}
\author{Avner Ash}
\address{Boston College, Chestnut Hill, MA  02445}
\email{Avner.Ash@bc.edu}
\author{Darrin Doud}
\address{Brigham Young University, Provo, UT  84602}
\email{doud@math.byu.edu}
\thanks{The first author thanks the NSA for support of this research through NSA grant H98230-09-1-0050.  This manuscript is submitted for publication with the understanding that the United States government is authorized to reproduce and distribute reprints.}

\begin{document}
\begin{abstract} Let $\bar\F_p$ be an algebraic closure of a finite field of characteristic $p$.  Let $\rho$ be a continuous homomorphism from the absolute Galois group of $\Q$ to $\GL(3,\bar\F_p)$ which is isomorphic to a direct sum of a character and a two-dimensional odd irreducible representation. Under the condition that the conductor of $\rho$ is squarefree, we prove that $\rho$ is attached to a Hecke eigenclass in the homology of an arithmetic subgroup $\Gamma$ of $\GL(3,\Z)$.  In addition, we prove that the coefficient module needed is, in fact, predicted by the main conjecture of \cite{ADP}.\end{abstract}
\maketitle

\section{Introduction}\label{intro}

Fix a prime $p$ and let $\bar\F_p$ be an algebraic closure of a finite field of characteristic $p$.  Generalizations of Serre's conjecture~\cite{S} connect the homology of an arithmetic group with a Galois representation $\rho$.  When the target of $\rho$ is $\GL(n,\bar\F_p)$, a conjecture was first published in~\cite{AS}, which was extended in~\cite{ADP} and further improved in~\cite{H}.  See Section~\ref{connections} for definitions and a statement of the conjecture in the cases that we will require.

In this paper, we will first discuss a general method of computing Hecke operators acting on the homology of certain complexes.  This should help us attach Galois representations to the boundary homology of congruence subgroups of $\GL(n,\Z)$ for any $n$.  In this paper we are able to treat the case of $n=3$. We use this method to prove the existence of eigenclasses in the homology of arithmetic groups that are attached (see section~\ref{connections}) to Galois representations that decompose as the sum of an irreducible two-dimensional odd Galois representation and a character.  

If we let $X_n$ be the Borel-Serre bordification of the symmetric space for $\GL(n,\R)$, the representations that we consider should be attached to eigenclasses in the homology of the boundary of $X_n/\Gamma$ for $n=3$. However, when $n>2$, the mod $p$ topology of the boundary is quite complicated.  Our method is designed to avoid the need to compute the homology of this boundary. 

Our main theorem (Theorem~\ref{final}) will show that for any $\rho:G_\Q\to\GL(3,\bar\F_p)$ with $p>3$ prime, such that $\rho$ is a direct sum of a two-dimensional odd irreducible representation and a character, and the Serre conductor of $\rho$ is squarefree, there is a Hecke eigenclass in the homology $H_3(\Gamma_0(3,N),V\otimes\epsilon)$ that is attached to $\rho$, where $N$, $V$, and $\epsilon$ are predicted by Conjecture~\ref{conjecture}.  In addition, for any such $\rho$, there may be several possible values for $V$; we will show that all of them yield eigenclasses with $\rho$ attached.

\section{Conjectural connections between Galois representations and arithmetic homology}\label{connections}

A Hecke pair $(\Gamma,S)$ in a group $G$ is a subgroup $\Gamma\subset G$ and a subsemigroup $S\subset G$ such that $\Gamma\subset S$ and for any $s\in S$ both $\Gamma\cap s^{-1}\Gamma s$ and $s\Gamma s^{-1}\cap \Gamma$ have finite index in $\Gamma$.

If we let $R$ be a ring and $M$ a right $R[S]$-module, then for $s\in S$ there is a natural action of a double coset $\Gamma s\Gamma$ on the homology $H_i(\Gamma,M)$.  We denote by $\HH(\Gamma,S)$ the $R$-algebra under convolution generated by all the double cosets $\Gamma s\Gamma$ with $s\in S$.  We call $\HH(\Gamma,S)$ a Hecke algebra, and the double cosets Hecke operators.  The action of the double cosets on homology makes $H_i(\Gamma,M)$ an $\HH(\Gamma,S)$-module.

We will use the following groups and semigroups in $\GL_n$.

\begin{definition} Let $N$ be a positive integer and $p$ a prime.
\begin{enumerate}
\item $S_0(n,N)^{\pm}$ is the semigroup of matrices $s\in M_n(\Z)$ such that $\det(s)$ is relatively prime to $pN$ and the first row of $s$ is congruent to $(*,0,\ldots,0)$ modulo $N$.
\item $S_0(n,N)$ is the subsemigroup of $s\in S_0(n,N)^{\pm}$ such that $\det(s)>0$.
\item $\Gamma_0(n,N)^\pm=S_0(n,N)^{\pm}\cap\GL(n,\Z)$.
\item $\Gamma_0(n,N)=S_0(n,N)\cap\GL(n,\Z)$.
\end{enumerate}
\end{definition}

In the case in which we are interested, the ring $R$ will be the algebraic closure $\bar\F_p$ of a finite field of order $p$, $S$  will be $S_0(n,N)$, and $\Gamma$ will be $\Gamma_0(n,N)$. We will denote the Hecke algebra $\HH(\Gamma_0(n,N),S_0(n,N))$ by $\HH_{n,N}$.  We note that $\HH_{n,N}$ is commutative, and contains the Hecke operators $T(\ell,k)=\Gamma D_{\ell,k}\Gamma$ where
$$D_{\ell,k}=\diag(\underbrace{1,\cdots,1}_{n-k},\underbrace{\ell,\cdots,\ell}_k).$$

\begin{definition} Let $V$ be an $\HH_{n,N}$-module, and let $v\in V$ be a simultaneous eigenvector of all the $T(\ell,k)$ with $\ell\nmid pN$ and $0\leq k\leq n$.  Denote by $a(\ell,k)$ the eigenvalue of $T(\ell,k)$ acting on $v$.

We say that the Galois representation $\rho:G_\Q\to\GL(n,\bar\F_p)$ is attached to $v$ if
$$\det(I-\rho(\frob_\ell)X)=\sum_{k=0}^n(-1)^k\ell^{k(k-1)/2}a(\ell,k)X^k$$
for all prime $\ell\nmid pN$ for which $\rho$ is unramified at $\ell$.
\end{definition}
\noindent(Note that we use the arithmetic Frobenius, so that if $\omega$ is the cyclotomic character, $\omega(\frob_\ell)=\ell$.)

The $\HH_{n,N}$ modules that we use to find Hecke eigenvectors attached to Galois representations will be computed as the homology of admissible modules for $S_0(n,N)$.

\begin{definition} For a fixed prime $p$, an admissible module $M$ (cf.~\cite{Ash-Duke}) for a subsemigroup $S$ of $\GL(n,\Q)$ is a finite dimensional vector space over $\F_p$ on which there exists an $N$ such that $S$ acts on $M$ through reduction modulo $N$.   (In particular, the denominators in $S$ are all prime to $N$.)
\end{definition}

Specifically, the admissible modules that we need will be irreducible $\bar\F_p[\GL(3,\F_p)]$-modules, on which subsets of $\GL(3,\Z)$ consisting of matrices of determinant prime to $p$ will act through reduction modulo $p$.  Such modules are parameterized by triples $(a,b,c)$ with $0\leq a-b,b-c\leq p-1$ and $0\leq c\leq p-2$.  The module corresponding to the triple $(a,b,c)$ will be denoted $F(a,b,c)$. (See Section~\ref{irreducible} for details.)

In order to connect irreducible modules with Galois representations, we will need the mod $p$ cyclotomic character $\omega$, and the {\it niveau} two characters $\omega_2$ and $\omega_2'$ \cite{S72}.  Note that a power of the cyclotomic character may be written as $\omega^a$, with $a$ well defined modulo $p-1$ (since $\omega$ has order $p-1$).  Similarly, a power of $\omega_2$ that is not a power of $\omega$ may be written as $\omega_2^m$ with $m$ well defined modulo $p^2-1$ and not a multiple of $p+1$.  Given such an $m$, with $0\leq m<p^2-1$, we may write $m=a+bp$  with $0\leq a,b<p$ and $a\neq b$ (for instance writing $m$ in base $p$). In fact, we may write $m=a+bp$ with $0<a-b\leq p$ (by adding $p$ to $a$ and subtracting one from $b$, as necessary).  Note that $a$ and $b$ are only well defined modulo $p-1$, since adding $p-1$ to each of $a$ and $b$ changes $m$ by $p^2-1$, yielding the same power of $\omega_2$.  

Finally, we recall the two different types of wild ramification for a representation $\rho:I_p\to\GL_2(\bar\F_p)$ of the form
$$\rho(I_p)\sim\begin{pmatrix}\omega^{a+1}&*\cr0&\omega^a\end{pmatrix},$$
namely {\it peu ramifi\'e} and {\it tr\`es ramifi\'e}, and refer the reader to \cite{S} for definitions.

We now state a conjecture connecting certain three-dimensional Galois representations with eigenvectors in arithmetic cohomology groups.  We state the conjecture only for certain  representations; for a more general conjecture that applies to a much wider class of representations see \cite{ADP,AS,H}.

\begin{cnj}\label{conjecture} Let $p$ be a prime, and let $\bar\F_p$ be an algebraic closure of $\F_p$. Let $\rho:G_\Q\to\GL(3,\bar\F_p)$ be a Galois representation that is a sum of an irreducible odd two-dimensional representation and a character.  Let $N$ be the Serre conductor of $\rho$ and $\epsilon$ the nebentype of $\rho$ (see \cite{S}).  Then we may choose an irreducible admissible $F_p[\GL(3,\F_p)]$-module $V$ such that $\rho$ is attached to a cohomology class in the $\HH_{3,N}$-module $H_3(\Gamma_0(3,N),V\otimes\epsilon)$.

If $\rho=\sigma\oplus\omega^c\psi$, where $\psi$ has conductor prime to $p$, we may describe the possible $V$ in terms of the restriction of $\sigma$ to inertia at $p$.  If
$$\sigma|_{I_p}\sim\begin{pmatrix}\omega^a&*\cr0&\omega^b\end{pmatrix}$$
we choose $a,b,c$ modulo $p-1$ so that $0< a-b,b-c\leq p$ and $0\leq c<p-1$, with the restriction that if $\sigma$ is {\it tr\`es ramifi\'e}, then $a-b=p$ and let $V$ be the irreducible module $F(a-2,b-1,c)$.  We may also choose $a,b,c$ so that $0<c-a,a-b\leq p$ and $0\leq b<p-1$, with $a-b=p$ if $\sigma$ is {\it tr\`es ramifi\'e}, and let $V=F(c-2,a-1,b)$.

If
$$\sigma|_{I_p}\sim\begin{pmatrix}\omega_2^{a+bp}&0\cr0&\omega_2'^{a+bp}\end{pmatrix}$$
and $0<a-b\leq p$, we either take $0<a-b,b-c\leq p$ and $0\leq c<p-1$ and choose $V=F(a-2,b-1,c)$, or we take $0<c-a,a-b\leq p$ and $0\leq b<p-1$ and take $V$ to be $F(c-2,a-1,b)$.

For each value of $V$ described above, there is an $\HH_{3,N}$-eigenclass in $H_3(\Gamma_0(3,N),V\otimes\epsilon)$ with $\rho$ attached.
\end{cnj}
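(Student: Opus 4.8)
The plan is to realize the desired eigenclasses by parabolic induction from the Levi $\GL_2\times\GL_1$ of a maximal parabolic of $\GL_3$. Write $\rho=\sigma\oplus\omega^c\psi$ as in the statement. Since we are in the situation of Theorem~\ref{final} ($p>3$, Serre conductor squarefree), all torsion in $\Gamma_0(3,N)$ has order prime to $p$, so for any admissible $M$ we have $H_3(\Gamma_0(3,N),M)\isom H_3(X_3/\Gamma_0(3,N),\widetilde M)$, where $X_3$ denotes the Borel--Serre bordification and $\widetilde M$ the associated local system. By Serre's conjecture~\cite{S} (now a theorem), the two-dimensional odd irreducible $\sigma$ is attached to a Hecke eigenclass $z_\sigma$ in $H_1$ of a congruence subgroup of $\GL_2(\Z)$, of level divisible only by primes dividing $N$ and prime to $p$, with coefficients in the admissible $\bar\F_p[\GL_2(\F_p)]$-module $W$ given by the Serre weight of $\sigma|_{I_p}$ (a twist of some $\mathrm{Sym}^r$, $0\le r\le p-1$, in the niveau-one case; the analogous niveau-two weight when $\sigma|_{I_p}\sim\diag(\omega_2^{a+bp},\omega_2'^{a+bp})$), its $\GL_2$-Hecke eigenvalues satisfying $\det(I-\sigma(\frob_\ell)X)=1-a_\sigma(\ell,1)X+\ell\,a_\sigma(\ell,2)X^2$. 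The character $\omega^c\psi$ gives a one-dimensional eigenclass $z_\psi$ in degree $0$ for $\GL_1$. That $\sigma$ is odd is precisely what makes $W\neq0$ and $z_\sigma\neq0$.

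For $P$ one of the two maximal parabolics $P_{2,1}$, $P_{1,2}$ of $\GL_3$, with Levi $L\isom\GL_2\times\GL_1$ and (abelian, two-dimensional) unipotent radical $N_P$, the boundary face of $X_3/\Gamma_0(3,N)$ attached to $P$ computes $H_*(\Gamma_0(3,N)\cap P,-)$; via the Hochschild--Serre spectral sequence for $1\to\Gamma_0(3,N)\cap N_P\to\Gamma_0(3,N)\cap P\to\Gamma_0(3,N)\cap L\to1$ the top ($\mathfrak n_P$-)contribution shifts homological degree by $\dim N_P=2$ and twists coefficients by $\wedge^{2}\mathfrak n_P$, after which the Künneth formula for $\Gamma_0(3,N)\cap L$ pairs $z_\sigma$ in degree $1$ with $z_\psi$ in degree $0$ (the other Künneth term vanishing since $H_1(\GL_1(\Z))=0$ for $p>2$), landing a cycle $\zeta_P$ in degree $3$. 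Restricting $V\otimes\epsilon$ to $L$, decomposing by the branching rule for $\GL_2\times\GL_1\subset\GL_3$, and applying the $\wedge^{2}\mathfrak n_P$-twist, there is an $L$-constituent isomorphic to $W$ tensored with the character through which $L$ sees $\psi$ \emph{precisely} when $V=F(a-2,b-1,c)$ for $P=P_{2,1}$ and $V=F(c-2,a-1,b)$ for $P=P_{1,2}$, with the orderings of $(a,b,c)$ as in the statement (the two orderings reflect the two ways the $\GL_2$-block of $L$ is positioned); the constraint $a-b=p$ in the \emph{tr\`es ramifi\'e} case is forced because only then is the $\GL_2$-weight produced the full $\mathrm{Sym}^{p-1}$, which is what a tr\`es ramifi\'e Serre weight requires, and the niveau-two constraint $0<a-b\le p$ plays the same role. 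One then transports $\zeta_P$ into $H_3(\Gamma_0(3,N),V\otimes\epsilon)$: either it is produced directly as an explicit nonzero cycle in a complex computing that group — this is the ``general method'' the paper develops, which sidesteps computing the full boundary homology — or one feeds it through the long exact sequence of the pair $(X_3/\Gamma_0(3,N),\partial)$, uses Poincar\'e--Lefschetz (Borel--Serre) duality to keep track of coefficients, and invokes the standard fact that a Galois representation which is a direct sum of irreducibles and is attached to an eigenclass in a subquotient of a finite-dimensional Hecke module is attached to an eigenclass in the module itself.

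It remains to compute the Hecke action on $\zeta_P$. On the $P$-face, the double coset $\Gamma D_{\ell,k}\Gamma$ for $\GL_3$ decomposes into cosets built from the $\GL_2$-Hecke operators acting on $z_\sigma$, the $\GL_1$-operator acting on $z_\psi$, and the action on $\mathfrak n_P$; this last action contributes powers of $\ell$ which conspire with the normalizing factors $\ell^{k(k-1)/2}$ so that the eigenvalues $a(\ell,k)$ of $\zeta_P$ satisfy
$$\sum_{k=0}^3(-1)^k\ell^{k(k-1)/2}a(\ell,k)X^k=\det\bigl(I-\sigma(\frob_\ell)X\bigr)\bigl(1-(\omega^c\psi)(\frob_\ell)X\bigr)=\det\bigl(I-\rho(\frob_\ell)X\bigr)$$
for all $\ell\nmid pN$ at which $\rho$ is unramified; thus $\rho$ is attached to $\zeta_P$, and $\zeta_P\neq0$ because $z_\sigma\neq0$ and the induction and Künneth maps are exact on the relevant pieces. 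Running this over $P\in\{P_{2,1},P_{1,2}\}$ and over the two admissible orderings of $(a,b,c)$ produces, for every $V$ listed in the statement, an $\HH_{3,N}$-eigenclass in $H_3(\Gamma_0(3,N),V\otimes\epsilon)$ with $\rho$ attached.

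\textbf{Main obstacle.} The two genuinely delicate points are the chain-level Hecke computation and the weight matching. For the first, one must organize the coset decomposition of the $\GL_3$ double cosets restricted to a parabolic face and extract exactly the powers of $\ell$ that make the determinant identity come out; this is the technical heart of the paper's general method. For the second, one must verify by the branching rule and the $\wedge^{2}\mathfrak n_P$-twist that the admissible module forced by the construction is $F(a-2,b-1,c)$ or $F(c-2,a-1,b)$ exactly as stated, correctly incorporating the \emph{peu}/\emph{tr\`es ramifi\'e} dichotomy and the niveau-two bookkeeping. A subsidiary point, also handled by the method, is ensuring that the glued cycle survives as a nonzero class in $H_3$ rather than bounding; the Galois-theoretic input (Serre's conjecture for $\sigma$) is used only as a black box.
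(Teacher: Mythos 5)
Your construction of the candidate class agrees in outline with the paper: take the Serre/Eichler--Shimura eigenclass $f$ for the two-dimensional piece in $H_1$ of a $\GL_2$ congruence subgroup, pair it with the character, and push it through a Hochschild--Serre shift by the rank-two unipotent radical to land in degree $3$ homology of $P\cap\Gamma_0(3,N)$; the determinant identity you write for the Hecke eigenvalues is exactly what the paper verifies (though it does so by an explicit chain-level computation with the coset representatives of Theorem~\ref{cosets}, the conjugation data of Theorem~\ref{calcx}, and the $U_d$-invariants $M^{U_d}\isom M(c;a,b)^d_\chi$ of Theorem~\ref{U-inv}, none of which you carry out --- you correctly flag this as the technical heart). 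Note also that the paper obtains the second predicted weight by applying the result to the twisted contragredient ${}^t\rho^{-1}\otimes\omega^2$ and invoking duality, rather than by running the argument on the second maximal parabolic as you propose.

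The genuine gap is the passage from the parabolic (boundary) class to an eigenclass in $H_3(\Gamma_0(3,N),V\otimes\epsilon)$ itself. Your fallback --- feed $\zeta_P$ through the long exact sequence of the pair and invoke ``the standard fact'' that an eigensystem attached to a subquotient of a finite-dimensional Hecke module occurs in the module --- does not apply: $H_3$ of the boundary is not a subquotient of $H_3(\Gamma_0(3,N),V\otimes\epsilon)$; it merely maps to it, and the eigenclass could lie in the kernel of that map (equivalently, be hit by a differential/connecting map), in which case its eigensystem need not occur in the interior homology at all. Your other option (``it is produced directly \ldots by the general method the paper develops'') simply asserts the conclusion. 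This survival problem is precisely what the paper's new machinery is for: it replaces the Borel--Serre boundary by the spliced sharbly resolution $X$, obtains a spectral sequence with $E^1_{03}\supset H_3(P_d\cap\Gamma,M)$ converging to $H_3(\Gamma,M)$, and then proves (Theorem~\ref{d}, using Borel--Serre duality, the Steinberg resolution, and the Ash--Duke theorems) that the only terms that could kill the class, $E^1_{13}$ and $E^2_{40}$, carry exclusively eigensystems attached to sums of characters; since your $\rho$ has an irreducible two-dimensional constituent, its eigensystem cannot be cancelled and survives to $E^\infty$. Without an argument of this kind (or some other way to exclude cancellation), the proposal establishes the eigensystem only in boundary homology, not in $H_3(\Gamma_0(3,N),V\otimes\epsilon)$, so the key step of the theorem is missing.
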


Our goal in this paper is to prove the following theorem.
\begin{theorem}\label{final}
Let $p>3$.  Then Conjecture~\ref{conjecture} is true for representations $\rho$ having squarefree conductor.
\end{theorem}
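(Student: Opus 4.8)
We now describe the strategy for proving Theorem~\ref{final}. Write $\rho=\sigma\oplus\omega^c\psi$ as in Conjecture~\ref{conjecture}, with $\sigma$ two-dimensional, odd and irreducible, $\psi$ of conductor prime to $p$, and let $N$ be the squarefree Serre conductor and $\epsilon$ the nebentype of $\rho$. The plan has three stages: descend $\sigma$ to a Hecke eigenclass for $\GL(2)$; induce that class together with the character $\omega^c\psi$ up a maximal parabolic of $\GL(3)$ inside the Borel--Serre boundary of $X_3/\Gamma_0(3,N)$; and push the resulting boundary class into $H_3(\Gamma_0(3,N),V\otimes\epsilon)$, verifying that it survives and that $\rho$ is attached to it. For the first stage we invoke Serre's Conjecture~\cite{S}, proved by Khare and Wintenberger: since $\sigma$ is odd and irreducible it is modular of the predicted weight and of level equal to its prime-to-$p$ Artin conductor $M$, which divides $N$ and is hence squarefree, with nebentype determined by that of $\rho$. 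Applying the usual Eichler--Shimura comparison (licit because $p>3$ and the weight lies in the Serre range) we obtain a simultaneous Hecke eigenclass $w\in H_1(\Gamma_0(2,M),W)$ with $\sigma$ attached, where $W$ is the irreducible admissible $\bar\F_p[\GL(2,\F_p)]$-module singled out by $\sigma|_{I_p}$ via Serre's weight recipe; here the relevant exponents are well defined only modulo $p-1$, and there is exactly one available weight when $\sigma$ is \emph{tr\`es ramifi\'e} (forcing the ``large'' weight) or when $\sigma|_{I_p}$ is of niveau two (in which case $w$ comes from a supersingular eigenform), and two available weights, interchanged by swapping the two inertial characters, otherwise.

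For the second stage, fix the standard maximal parabolic $P\subset\GL(3)$ with Levi $L\simeq\GL(2)\times\GL(1)$ and two-dimensional abelian unipotent radical $U$, and set $\Gamma_P=\Gamma_0(3,N)\cap P$, which sits in an extension $1\to\Gamma_U\to\Gamma_P\to\Gamma_L\to1$ with $\Gamma_U\cong\Z^2$ and $\Gamma_L$ commensurable with $\GL(2,\Z)$, hence of virtual cohomological dimension $1$. The Lyndon--Hochschild--Serre spectral sequence for this extension, together with the vanishing of $H_i(\Gamma_L,-)$ for $i>1$ and of $H_j(\Z^2,-)$ for $j>2$, degenerates in total degree $3$ onto the single term $(i,j)=(1,2)$, yielding a Hecke-equivariant isomorphism
\[
H_3(\Gamma_P,V\otimes\epsilon)\;\cong\;H_1\!\big(\Gamma_L,\,(V\otimes\epsilon)^{U}\otimes{\textstyle\bigwedge^2 U}\big).
\]
Computing highest weights shows that, when $V=F(a-2,b-1,c)$, the right-hand coefficient module is, up to a central twist, exactly $W$ (the twist by $\bigwedge^2 U$ on the $\GL(2)$-factor being the source of the shift from $(a,b)$ to $(a-1,b)$, i.e. to weight $a-b+1$, matching Serre's recipe), so that $w$ provides the required input; tensoring $w$ with the $\GL(1)$-eigenvector for $\omega^c\psi$ and with a generator of $\bigwedge^2 U$ produces a canonical class $z\in H_3(\Gamma_P,V\otimes\epsilon)$. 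Using the method of this paper for computing Hecke operators on the homology of such complexes, one checks that $z$ is a simultaneous eigenclass for all $T(\ell,k)$ with $\ell\nmid pN$, with eigenvalues $a(\ell,k)$ satisfying
\[
\sum_{k=0}^{3}(-1)^k\ell^{k(k-1)/2}a(\ell,k)X^k=\det\!\big(I-\sigma(\frob_\ell)X\big)\,\big(1-\omega^c\psi(\frob_\ell)X\big),
\]
i.e. $\rho=\sigma\oplus\omega^c\psi$ is attached to $z$; the normalizing factors $\ell^{k(k-1)/2}$ are precisely those demanded by the block shape of $\rho(\frob_\ell)$ coming from $P$, which is why the shifted module $F(a-2,b-1,c)$, rather than $F(a,b,c)$, is the correct coefficient system.

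For the third stage, the inclusion $\Gamma_P\hookrightarrow\Gamma_0(3,N)$ induces a Hecke-equivariant map $H_3(\Gamma_P,V\otimes\epsilon)\to H_3(\Gamma_0(3,N),V\otimes\epsilon)$, and it remains to show that the image of $z$ is nonzero. The obstruction to survival is controlled by the connecting homomorphism in the homology long exact sequence of the Borel--Serre pair $(X_3/\Gamma_0(3,N),\partial)$, which via Lefschetz duality is measured by $H^1(\Gamma_0(3,N),-)$; we must check that the Eisenstein eigenvalue system carried by $z$ does not meet this obstruction. This is where both hypotheses enter: $p>3$ removes the small-characteristic degeneracies in the weight recipe and in the relevant torsion, and the squarefreeness of $N$ keeps the boundary face, the group $\Gamma_P$, and the Hecke action all of the elementary $\GL_2$-type analyzed above. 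Finally, the several modules $V$ listed in Conjecture~\ref{conjecture} are recovered by running the construction through each of the two associate maximal parabolics of $\GL(3)$ --- equivalently, by placing the character $\omega^c\psi$ in the last coordinate or in the first, which gives the families $F(a-2,b-1,c)$ and $F(c-2,a-1,b)$ respectively --- and, within each family, through each available Serre weight of $\sigma$: one weight, with $a-b=p$, in the \emph{tr\`es ramifi\'e} case, as the conjecture requires; one in the niveau-two case; two otherwise.

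We expect the main obstacle to be the third stage: because $\rho$ is reducible, $z$ is an Eisenstein-type class, and such boundary classes need not lift to $H_3(\Gamma_0(3,N),V\otimes\epsilon)$; controlling the connecting map finely enough to guarantee nonvanishing, uniformly across all the ramification cases, is the heart of the matter. The bookkeeping in the second stage --- matching each $V$ to Serre's weight recipe and normalizing the $\GL(3)$-Hecke operators against those for $\GL(2)\times\GL(1)$ so that the $\ell^{k(k-1)/2}$ factors come out right --- is the second point requiring care.
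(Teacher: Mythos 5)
Your stages 1 and 2 track the paper: the paper also starts from Serre's conjecture plus Eichler--Shimura to get an eigenclass $f\in H_1(\Gamma_0(2,N_1)^\pm,F(a,b)_{\chi_1})$, and it also passes through the Hochschild--Serre spectral sequence for $1\to U_d\cap\Gamma_0(3,N)\to P_d\cap\Gamma_0(3,N)\to\Gamma_0(2,N/d)^\pm\to1$, identifying the relevant coefficient module $H_2(U_d\cap\Gamma,M)=M^{U_d\cap\Gamma}\cong M(c;a,b)^d_\chi$ (Theorem~\ref{U-inv}) and computing the $\GL_3$-Hecke eigenvalues of $f$ by the explicit double-coset analysis (Theorems~\ref{hecke-equ}, \ref{cosets}, \ref{calcx}), exactly matching $\det(I-\rho(\Frob_\ell)X)$. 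But your stage 3 is a genuine gap, and you say so yourself: you propose to push $z$ from $H_3(\Gamma_P,V\otimes\epsilon)$ into $H_3(\Gamma_0(3,N),V\otimes\epsilon)$ and to control nonvanishing by the long exact sequence of the Borel--Serre pair and a Lefschetz-duality comparison with $H^1(\Gamma_0(3,N),-)$, but you give no argument that the Eisenstein class survives; ``we must check that the eigenvalue system does not meet this obstruction'' is precisely the unproved point. The paper deliberately avoids the Borel--Serre boundary (its mod $p$ topology for $n=3$ is intractable) and instead builds the spliced sharbly complex $X$ (points of $\PP^2(\Q)$ in degree $0$, $\bigoplus_H St_2(H)$ in degree $1$, sharblies above), giving a Hecke-equivariant spectral sequence $E^1_{jq}=H_q(\Gamma,X_j\otimes M)\Rightarrow H_{j+q}(\Gamma,M)$ in which your class sits in $E^1_{03}\supseteq H_3(P_d\cap\Gamma,M)$. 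The decisive input is Theorem~\ref{d}: the only terms whose differentials can hit $E_{03}$, namely $E^1_{13}$ (plane stabilizers, with a Steinberg twist) and $E^2_{40}\cong H^1(\Gamma,M)$, carry only Hecke eigensystems attached to sums of characters, by the Ash--Duke low-degree results; since the eigensystem of $f$ has an irreducible two-dimensional constituent, its generalized eigenspace cannot be cancelled and survives to $E^\infty$, landing in $H_3(\Gamma_0(3,N),F(a,b,c)_\chi)$. Nothing in your sketch supplies a substitute for this mechanism.

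A secondary point: your plan to obtain the second family of weights $F(c-2,a-1,b)$ by ``running the construction through the associate parabolic'' is not what the paper does and is not obviously available in its framework --- in the spliced complex the plane-stabilizer column contributes only character-sum eigensystems (that is exactly what makes Theorem~\ref{d} work), so the other embedding is not directly realized there. The paper instead applies Theorem~\ref{main} to the twisted contragredient ${}^t\rho^{-1}\otimes\omega^2$ and invokes the compatibility of Conjecture~\ref{conjecture} with duality (\cite[Proposition 2.8]{AS}) to convert that eigenclass into one for $\rho$ in the second predicted weight. You would need either this duality step or a separate construction to cover all the modules $V$ listed in the conjecture.
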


Note that generically, for a tamely ramified Galois representation, there will be two choices of the integers $a$ and $b$ (obtained by permuting the diagonal characters in each case).  Hence, for a tamely ramified representation there will normally be four predicted weights (if $\sigma$ is wildly ramified, there will only be two weights, since we cannot permute the two characters on the diagonal). It can happen that there are additional weights. For instance, if $a-b\equiv 1\pmod{p-1}$, we may choose $a=b+1$ or $a=b+p$.  Unless otherwise indicated (i.e. in the {\it tr\`es ramifi\'e} case), all of these weights are predicted.

In the conjecture, the two predictions of weights arise from embedding the image of $\rho$ into one of the two standard Levi subgroups
$$\begin{pmatrix}*&*&0\cr *&*&0\cr0&0&*\end{pmatrix}\quad\text{ and }\quad \begin{pmatrix}*&0&0\cr 0&*&*\cr0&*&*\end{pmatrix}.$$
There is also an embedding of the image of $\rho$ into the Levi subgroup
$$\begin{pmatrix}*&0&*\cr0&*&0\cr*&0&*\end{pmatrix}$$
but this embedding, for $\rho$ of the type we consider, would violate a strict parity condition, and thus has no predicted weights according to the main conjecture of \cite{ADP}.  Theoretical considerations lead us to believe that we might find an eigenclass attached to $\rho$ in a weight predicted from this forbidden embedding in a homology group of different degree, namely, $H_2(\Gamma_0(3,N),V\otimes\epsilon)$.  Proving the existence of this class is one possible future application of the techniques described in this paper.

\section{Hecke actions on induced representations}\label{g-sets}

  For any set $A$, $\Z[A]$ denotes the abelian group of formal finite linear combinations of elements of $A$.  If $A$ is a semigroup or group, $\Z[A]$ is naturally a ring.   Tensor products without a subscript are to be taken over $\Z$.

We follow Brown's notation for tensor products  p. 55, Chap. III~\cite{B}, except our conventions reverse left and right.  
If $G$ is a group, $H$ a subgroup of $G$, $A$ is a right $\Z H$-module and $B$ is a left $\Z H$-module and a right $\Z G$-module, then $A\otimes_{\Z H} B$ denotes the right 
$\Z G$-module where $ah\otimes_{\Z H} b=a\otimes_{\Z H} hb$ and 
$(a\otimes_{\Z H} b)g = a\otimes_{\Z H} bg$ for any $h\in H$ and $g\in G$.
If $B=\Z G$ (with the obvious right action of $H$ and left action of $G$),  $A\otimes_{\Z H} \Z G$ is the induced module from $H$ to $G$ of $A$.

If $M$ and $N$ are two right $\Z G$-modules, then $M\otimes_G N$ denotes the right 
$\Z$-module where $m\otimes_{G} n=mg\otimes_{G} ng$.  It is equal to the coinvariants of the right $\Z G$-module $M\otimes N$ where
$(m\otimes n)=mg\otimes ng$.

Let $\Gamma\subset\ S \subset G$ where $G$ is a group, $\Gamma$ a subgroup, $S$ a subsemigroup and $(\Gamma,S)$ a Hecke pair.  Let $X$ be a set on which $G$ acts on the right.  

Remark: In general, the $S$-action will not preserve the $\Gamma$-orbits in $X$.  Here is an example:  Consider $\Gamma=\Gamma_0(2,25)$ acting on $P^1(\Z)$.  Let $s=\diag(2,1)$.  Note that $(5:1)$ and $(5:6)=(5:1)\bmatrix 1&1\\0&1\endbmatrix$ are in the same $\Gamma$-orbit.  However $(5:1)s=(10:1)$ and $(5:6)s=(10:6)=(5:3)$ are in different $\Gamma$ orbits.  

For any $x\in X$ and any subsemigroup $T$ of $G$, write $T_x=\Stab_T x$.  We define the concept of an $S$-sheaf $W$ on $X$:  For any 
$x\in X$, $W_x$ is an abelian group, and for any $g\in S, x\in X$ there is given a homomorphism $\mu(g):W_x\to W_{xg}$ such that $\mu(h)\circ\mu(g)=\mu(gh)$ for all $g,h\in S$ and $\mu(1)=id_{W_x}$.  Then $W=\oplus_{x\in X} W_x$ is a right $\Z S$-module.  If $F$ is any 
right $\Z S$-module, then $W\otimes F$ is a right $\Z S$-module with the action given by $(w\otimes f)g=wg\otimes fg$ for any $g\in S$.

By Corollary 5.4 p. 68 in~\cite{B}, an $S$-sheaf $W$ restricted to $\Gamma$ is a direct sum of induced $\Gamma$-modules.
For $a\in X$, write $X_a=a\Gamma$ for the orbit of $a$ under $\Gamma$.
Choose a subset $A\subset X$ such that $X=\coprod_{a\in A} X_a$. For each $a\in A$ let $W[a]=\oplus _{x\in X_a} W_{x}\subset W$.  Then $W[a]$ is naturally isomorphic as right $\Gamma$-module to the induced module $W_a\otimes_{\Z\Gamma_a} \Z\Gamma$.  Note that in general $S$ does not preserve $W[a]$.

We abbreviate $\otimes_{\Z\Gamma_a}$ by $\otimes_{a}$.  Then
$$
W\approx \oplus_{a\in A} W[a] \approx
\oplus_{a\in A} W_a \otimes_{a} \Z\Gamma.
$$

Let $F$ be a right $S$-module.    Then 
$W\otimes F$ is an $S$-module and therefore 
$W\otimes_\Gamma F=H_0(\Gamma, W\otimes F)$ has a natural action on it of the Hecke algebra  $\HH(\Gamma,S)$ which we want to compute.

If $Y$ is a set on which a group $K$ acts on the right, let $\FF(Y/K)$ denote the set of $K$-invariant functions whose support is a finite number of $K$-orbits.  If in addition a group $H$ acts on the left of $Y$, commuting with the $K$-action, let 
$\FF(H\backslash Y/K)$ denote the set of $H\times K$-invariant functions whose support is a finite number of $H\times K$-orbits.   

Given a Hecke pair $(\Gamma,S)$, the Hecke algebra $\HH=\HH(\Gamma,S)$ will be identified with $\FF(\Gamma\backslash S/\Gamma)$, where multiplication is convolution of functions.

For any $\Gamma$-module $N$, let $\Phi$ denote the natural projection from $N$ to the coinvariants $N_\Gamma=H_0(\Gamma,N)$.
If $h\in\HH$,
and 
$z\in W\otimes F$, 
writing $T_h$ for the Hecke operator corresponding to $h$ we have:
$$
\Phi(z)|T_h=\Phi\left({\sum_{g\in\Gamma\backslash S/\Gamma} h(g)zg}\right).
$$
This does not depend on the choice of coset representatives.
If $h$ is the characteristic function of 
$\Gamma s\Gamma=\coprod s_\alpha\Gamma$, then
$$
\Phi(z)|T_h=\Phi\left({\sum_{\alpha} zs_\alpha}\right)
$$
since $h(g)=0$ unless $g\in \Gamma s\Gamma$, in which case $g=s_\alpha\gamma$ for some $\alpha,\gamma$, and then $h(s_\alpha)=1$.

We will now determine the Hecke action in terms of the isomorphism of homology given by Shapiro's lemma.
Associativity of tensor products gives a canonical isomorphism 
$\lambda: W[a]\otimes_\Gamma F=(W_a\otimes_a\Z\Gamma)\otimes_\Gamma F\approx 
W_a\otimes_{\Gamma_a} F$ 
via $w\otimes_a\gamma\otimes_\Gamma f \mapsto w\otimes_{\Gamma_a}f\gamma^{-1}$.  

Now $h\in\HH$ acts on $W\otimes_\Gamma F = \oplus_a W[a]\otimes_\Gamma F\approx 
\oplus_a W_a\otimes_{\Gamma_a} F$.  We seek a formula for $T_h$ on an element in 
$\oplus_a W_a\otimes_{\Gamma_a} F$.
We know that if $w\in W_a,\gamma\in\Gamma$ and $f\in F$,
$$
\Phi(w\otimes_a\gamma\otimes f)|T_h=(w\otimes_a\gamma\otimes_\Gamma f)|T_h=
\sum_{a\in A,g\in\Gamma\backslash S/\Gamma} h(g)
((w\otimes_a\gamma)g\otimes_\Gamma fg).
$$
Compute
$(w\otimes_a\gamma)g\in W$ as follows: 
$(w\otimes_a\gamma)g=((w\otimes_a 1)\gamma)g=w\gamma g$.   Write
$a\gamma g =b(a,\gamma,g)\delta(a,\gamma,g)$ with $b(a,\gamma,g)\in A, \delta(a,\gamma,g)\in\Gamma$.  Then $w\in W_a$ implies that 
$w\gamma g\in W_{b(a,\gamma,g)}\delta(a,\gamma,g)$ which we can write as
$$
(w\gamma g\delta(a,\gamma,g)^{-1}\otimes_{b(a,\gamma,g)}1) \delta(a,\gamma,g)
=w\gamma g\delta(a,\gamma,g)^{-1}\otimes_{b(a,\gamma,g)} \delta(a,\gamma,g).
$$
Thus
$$
\Phi(w\otimes_a\gamma\otimes f)|T_h=
\sum_{a,g} h(g) w\gamma g\delta(a,\gamma,g)^{-1}\otimes_{b(a,\gamma,g)} \delta(a,\gamma,g)\otimes_\Gamma fg.
$$
If $z=\sum_a w^a\otimes_a\gamma^a\otimes f^a$, 
we obtain:
$$
\Phi(z)|T_h = \sum_{g\in\Gamma\backslash S/\Gamma}\sum_a 
h(g) w^a\gamma^a g\delta(a,\gamma^a,g)^{-1}\otimes_{b(a,\gamma^a,g)} \delta(a,\gamma^a,g)\otimes_\Gamma f^ag.
$$
We interchange the order of summation, and then given a fixed $a$, we can choose the representatives $g$ of the double cosets in a way that depends on $a$, without changing the value of the sum.  If $ag\in X_b$ then we choose $g$ so that $ag = b$.  Call such a choice $g_{ab}$.  In other words, we always have
$$
ag_{ab}=b.
$$
Since we want to compute $T_h$ on $\oplus_a W_a\otimes_{\Gamma_a} F$ via $\lambda$, without loss of generality we may take $\gamma^a=1$ for all $a$.  
Then $b(a,1,g_{ab})=b$ and $\delta(a,1,g_{ab})=1$.
We obtain
$$
\Phi\left(\sum_a w^a\otimes_a 1 \otimes f^a\right)|T_h
=\Phi\left(\sum_a\sum_b\sum_{g_{ab}\in\Gamma\backslash S/\Gamma:ag=b}
h(g_{ab})w^a g_{ab}\otimes_b 1\otimes_\Gamma f^ag_{ab}\right).
$$
Via the isomorphism $\lambda$, this corresponds to
$$
\left(\sum_a w^a \otimes_{\Gamma_a} f^a\right)|T_h=
\sum_a\sum_b\sum_{g\in\Gamma\backslash S/\Gamma:ag=b}
h(g)w^ag\otimes_{\Gamma_b}  f^ag.
$$
Call this Formula (1).

Define $h_{ab}(g) = h(g)$ if $ag=b$ and $=0$ otherwise.  Clearly, $h_{ab}\in\FF(\Gamma_a\backslash S/\Gamma_b)$.  Corresponding to $h_{ab}$ is a Hecke operator  $T_{ab}$.  It maps $\Gamma_a$-homology to $\Gamma_b$-homology.

\begin{theorem}\label{hecke-equ}
 Let $W$ be an $S$-sheaf on the $G$-set $X$ and $F$ a right $S$-module.  Let $A$ be a set of representatives of the $\Gamma$-orbits of $X$.  Let
$$
\lambda: W\otimes_{\Gamma} F \to \oplus_a W_a\otimes_{\Gamma_a} F
$$
be the natural isomorphism described above. Then $T_h$ on the left is equivariant to the matrix 
$T_{ab}$ on the right. 

If $F$ is a resolution of $Z$ by projective $\Z[\Gamma]$-modules which are also $S$-modules, then $\lambda$ induces the isomorphism on homology given by Shapiro's lemma and we have
$$
H_q(\Gamma, W) \approx 
\oplus_{a\in A} H_q({\Gamma_a}, W_a), 
$$
and again $T_h$ on the left is equivariant to the matrix 
$T_{ab}$ on the right.
\end{theorem}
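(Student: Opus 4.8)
The plan is to observe that the computation carried out in the paragraphs preceding the theorem already establishes the statement at the chain level, so that the proof reduces to three routine verifications: that the functions $h_{ab}$ genuinely define Hecke operators $T_{ab}$ between the $\Gamma_a$- and $\Gamma_b$-homology; that the right-hand side of Formula (1) is exactly the action of the matrix $(T_{ab})$ under $\lambda$; and that the passage from $H_0$ to all $H_q$ goes through by feeding in a resolution. First I would check that $h_{ab}\in\FF(\Gamma_a\backslash S/\Gamma_b)$. Bi-invariance under $\Gamma_a\times\Gamma_b$ is immediate from the $\Gamma$-bi-invariance of $h$ together with $a\gamma_a=a$ and $b\gamma_b=b$ for $\gamma_a\in\Gamma_a$, $\gamma_b\in\Gamma_b$, since $a(\gamma_a g\gamma_b)=(ag)\gamma_b$, which equals $b$ precisely when $ag=b$. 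For the support: $h$ is supported on finitely many cosets $g\Gamma$ (using that $(\Gamma,S)$ is a Hecke pair, so each $\Gamma s\Gamma$ is a finite union of such), and inside one coset $g\Gamma$ with $ag\in X_a\Gamma$-orbit $X_b$ one checks, writing $g'=g\gamma$, that $\{g'\in g\Gamma:ag'=b\}$ is the single left coset $g\Gamma_b$ (after choosing the representative $g$ so that $ag=b$, which is possible because $ag\Gamma=X_{ag}$ is one orbit). Hence $\mathrm{supp}(h_{ab})$ is a finite union of cosets $u_i\Gamma_b$, so $h_{ab}\in\FF(\Gamma_a\backslash S/\Gamma_b)$.

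Next I would pin down the operator $T_{ab}$ itself. The point to be careful about is that $W_a$ is a priori a $\Gamma_a$-module and $W_b$ a $\Gamma_b$-module, with no direct relation; it is only the sheaf maps $\mu(g)$ that carry $W_a$ into $W_b$. So $T_{ab}$ cannot be given by a naïve double-coset formula; instead one defines it on $W_a\otimes_{\Gamma_a}F$ by $w\otimes_{\Gamma_a}f\mapsto\sum_i h_{ab}(u_i)\,(wu_i)\otimes_{\Gamma_b}(fu_i)$, with $wu_i$ denoting $\mu(u_i)(w)\in W_b$. Independence of the choice of the $u_i$ and well-definedness on coinvariants is the same one-line argument as in the $H_0$ discussion in the text: if $\gamma_a\in\Gamma_a$ then $\gamma_a u_i=u_{\sigma(i)}\gamma_b^{(i)}$ for a permutation $\sigma$ and $\gamma_b^{(i)}\in\Gamma_b$, and $h$-invariance plus the relation defining $\otimes_{\Gamma_b}$ absorb the change. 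Once $T_{ab}$ is in hand, the identification with Formula (1) is bookkeeping: the map $g\Gamma\mapsto g\Gamma_b$ (representatives chosen with $ag=b$) is a bijection between the cosets $g\Gamma$ in $\mathrm{supp}(h)$ with $ag\in X_b$ and the cosets of $\mathrm{supp}(h_{ab})$, and under it the $(a,b)$-block of Formula (1) becomes exactly $T_{ab}$ applied to $w^a\otimes_{\Gamma_a}f^a$. Summing over $b\in A$ gives $\lambda\circ T_h=(T_{ab})\circ\lambda$ on $W\otimes_\Gamma F$, which is the first assertion.

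For the homology statement I would take $F=F_\bullet$ a resolution of $\Z$ by $\Z[\Gamma]$-projective modules that are also $S$-modules with $S$-equivariant differentials (such resolutions exist; cf.\ \cite{B,Ash-Duke}). Restriction to $\Gamma_a$ keeps the modules projective, since $\Z[\Gamma]$ is free over $\Z[\Gamma_a]$, so $H_q(\Gamma,W)=H_q(W\otimes_\Gamma F_\bullet)$ and $H_q(\Gamma_a,W_a)=H_q(W_a\otimes_{\Gamma_a}F_\bullet)$. The degreewise associativity isomorphism $W[a]\otimes_\Gamma F_n=(W_a\otimes_{\Gamma_a}\Z\Gamma)\otimes_\Gamma F_n\cong W_a\otimes_{\Gamma_a}F_n$ is by construction the map $\lambda$; since $W\otimes_\Gamma F_\bullet=\bigoplus_a W[a]\otimes_\Gamma F_\bullet$ and $W[a]\cong\Ind_{\Gamma_a}^\Gamma W_a$, taking homology recovers precisely the Shapiro decomposition $H_q(\Gamma,W)\cong\bigoplus_a H_q(\Gamma_a,W_a)$. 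Finally, $\lambda$ is a chain map (the differentials of $F_\bullet$ are $S$-equivariant, hence commute with all the structure in sight) and it intertwines $T_h$ with $(T_{ab})$ in each degree by the first part, so it does so on homology, completing the proof.

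The main obstacle is not conceptual — the essential content is already in Formula (1) — but rather making the two delicate points above airtight: the correct definition of $T_{ab}$, which must thread through the sheaf maps $\mu(g)$ rather than a literal double-coset formula (and for which one must check representative-independence), and the left-coset bookkeeping in the second step, where the failure of $S$ to preserve $\Gamma$-orbits (the Remark at the start of this section, with its $\PP^1(\Z)$ example) makes it easy to conflate cosets of $\Gamma$ with cosets of $\Gamma_b$ and thereby miscount. Everything else (existence of an $S$-equivariant projective $\Z[\Gamma]$-resolution, preservation of projectivity under restriction to $\Gamma_a$, and the identification of $\lambda$ with Shapiro's isomorphism) is standard.
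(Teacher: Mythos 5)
Your proposal is correct and follows essentially the same route as the paper: the heart in both cases is the coset bookkeeping showing that, once representatives are chosen with $ag=b$, the elements of $\Gamma s\Gamma$ (resp.\ of $\mathrm{supp}(h)$) satisfying $ag'=b$ form exactly the left cosets $g\Gamma_b$, so the $(a,b)$-block of Formula (1) is $T_{ab}$, and the homology statement follows because everything commutes with the differentials of $F$. The only differences are cosmetic: you treat a general $h$ via an explicit coset bijection and spell out the well-definedness of $T_{ab}$ through the sheaf maps $\mu$, where the paper reduces without loss of generality to the characteristic function of a single double coset $\Gamma s\Gamma=\coprod s_\alpha\Gamma$ and argues directly with the $s_\alpha$.
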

\begin{proof}
Without loss of generality, $h$ is the characteristic function of 
$\Gamma s\Gamma=\coprod s_\alpha\Gamma$.  We fix $a,b$.  
If $as_\alpha\in X_b$, we choose $s_\alpha=g_{ab}$.

Then the term in Formula (1) corresponding to $a,b$ is
$$
\Theta_{ab}:=
\sum_{s_\alpha:as_\alpha\in X_b}
h(s_\alpha)w^as_\alpha\otimes_{\Gamma_b}  f^as_\alpha.
$$ 
We must show that
$$
\Theta_{ab}=(w^a \otimes_{\Gamma_a} f^a)|T_{ab}.
$$
To compute $|T_{ab}$, write $\Gamma_a s \Gamma_b =\coprod t\Gamma_b$.  Then
$$
(w^a \otimes_{\Gamma_a} f^a)|T_{ab} = \sum_t h_{ab}(t)w^at\otimes_{\Gamma_b} f^at.
$$
Now $h_{ab}(t)=0$ unless $at=b$.  So if 
$h_{ab}(t)\ne 0$, since $t\in\Gamma s\Gamma$, we have $t=s_\alpha\gamma$ for some $\alpha,\gamma$, and 
$b=at=as_\alpha\gamma=b\gamma$ (since $s_\alpha=g_{ab}$.)  It follows that $\gamma\in\Gamma_b$.  

In other words, $h_{ab}(t)=0$  unless $t\in s_\alpha\Gamma_b$ for some $\alpha$ for which $as_\alpha=b$, and in this case $h_{ab}(t)=h(t)$. Therefore 
$$
(w^a \otimes_{\Gamma_a} f^a)|T_{ab} = 
\sum_{t\in s_\alpha\Gamma_b:as_\alpha\in X_b} h(t)w^at\otimes_{\Gamma_b} f^at =\sum_{s_\alpha:as_\alpha\in X_b}h(s_\alpha)w^as_\alpha\otimes_{\Gamma_b} f^as_\alpha 
$$
which equals $\Theta_{ab}$.

If $F$ is a complex, the action of the Hecke operators on 
$W\otimes_{\Gamma} F$ and $\oplus_a W_a\otimes_{\Gamma_a} F$
commutes with the boundary maps in $F$.
Therefore $T_h$ on 
$H_q(\Gamma, W)$ 
and $T_{ab}$ on 
$\oplus_{a\in A} H_q({\Gamma_a}, W_a)$
are equivariant with respect to $\lambda$.
\end{proof}

\section{Preparing to compute Hecke Operators in $\GL_3$}\label{preparing}

In this section we determine the $g_{ab}$'s that we need to study reducible 3-dimensional Galois representations.

Let $P_0$ be the stabilizer of the line spanned by $(1,0,\ldots,0)$ in affine $n$-space, on which $\GL(n)$ acts on the right.  Note that the elements of $P_0$ are characterized by the fact that all entries in the top row except for the first are zero.

We set 
$$U_0=\begin{pmatrix} 1&0&\cdots&0\cr
*&1&\cdots&0\cr
&&\cdots&\cr
*&0&\cdots&1\cr\end{pmatrix},L_0^1=\begin{pmatrix}*&0&\cdots&0\cr
0&1&\cdots&0\cr
\cdots\cr
0&0&\cdots&1\end{pmatrix},L_0^2=\begin{pmatrix}1&0&\cdots&0\cr
0&*&\cdots&*\cr
\cdots\cr
0&*&\cdots&*\cr
\end{pmatrix}.$$
For $g\in P_0$, we define $\psi_0^1(g)\in\GL(1)$ and $\psi_0^2(g)\in\GL(n-1)$  by
$$g=\begin{pmatrix}\psi_0^1(g)&0\cr*&\psi_0^2(g)\end{pmatrix}.$$
We set $$g_x=\begin{pmatrix}1&x&0\cr0&1&0\cr 0&0&I_{n-2}\end{pmatrix},$$
and we define
$$P_x=g_x^{-1}P_0g_x,\ U_x=g_x^{-1}U_0g_x,L_x^1=g_x^{-1}L_0^1g_x,L_x^2=g_x^{-1}L_0^2g_x.$$
For $s\in P_x$, we set $\psi_x^i(s)=\psi_0^i(g_xsg_x^{-1})$.

We have the following theorem (the steps of the proof are identical to those in \cite[Theorem 7]{A}, after transposing and replacing $d$ by $-d$):
\begin{theorem}\label{psid} Let $d$ be a positive divisor of $N$, and assume $\gcd(d,N/d)=1$.  Then
\begin{enumerate} 
\item $U_dL_d^1\cap \Gamma_0(n,N)=U_d\cap\Gamma_0(n,N)$.
\item If $s\in P_d\cap S_0(n,N)^\pm$, then $\psi_d^1(s)\equiv s_{11}\pmod d$ and $\psi_d^2(s)_{11}\equiv s_{11}\pmod{N/d}$.
\item  $\psi_d^2(P_d\cap S_0(n,N)^\pm)\subset S_0(n-1,N/d)^\pm$.
\item $\psi_d^2$ induces an exact sequence
$$1\to U_d\cap \Gamma_0(n,N)\to P_d\cap \Gamma_0(n,N)\too{\psi_d^2}\Gamma_0(n-1,N/d)^\pm\to 1.$$
\end{enumerate}
\end{theorem}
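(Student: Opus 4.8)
The plan is to reduce all four parts to the single explicit computation of the conjugate $t:=g_d s g_d^{-1}$, exactly as in \cite[Theorem 7]{A} (up to transposing and replacing $d$ by $-d$). Since $g_d=I+dE_{12}$ implements one elementary row-and-column operation, this conjugation is completely transparent: one finds $t_{11}=s_{11}+d\,s_{21}$; the first row of $t$ in columns $2,\dots,n$ has entries $s_{12}+d\,s_{22}-d\,s_{11}-d^2 s_{21}$ and $s_{1j}+d\,s_{2j}$ ($j\ge 3$); and the lower-right $(n-1)\times(n-1)$ block of $t$ agrees entry by entry with that of $s$ except that its first column is $(s_{i2}-d\,s_{i1})_{2\le i\le n}$. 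Hence $s\in P_d$, i.e. $t\in P_0$, is precisely the vanishing of the off-diagonal part of the first row of $t$; it forces $s_{1j}=-d\,s_{2j}$ for $j\ge 3$ together with the relation $s_{12}=d\,(s_{11}-\psi_d^2(s)_{11})$, where $\psi_d^1(s)=t_{11}$ and $\psi_d^2(s)_{11}=t_{22}=s_{22}-d\,s_{21}$.

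Granting this, part (2) is immediate: $\psi_d^1(s)=s_{11}+d\,s_{21}\equiv s_{11}\pmod d$, while the first-row condition defining $S_0(n,N)^\pm$ gives $s_{12}\equiv 0\pmod N$, so $d\,(s_{11}-\psi_d^2(s)_{11})\equiv 0\pmod{d\cdot(N/d)}$ and one may cancel the factor $d$ because $\gcd(d,N/d)=1$. For part (3), block triangularity of $t$ gives $\det s=\psi_d^1(s)\cdot\det\psi_d^2(s)$, so $\det\psi_d^2(s)$ is prime to $p(N/d)$; and the off-diagonal entries of the first row of $\psi_d^2(s)$ are the $s_{2j}$ with $j\ge 3$, for which $d\,s_{2j}=-s_{1j}\equiv 0\pmod N$ forces $s_{2j}\equiv 0\pmod{N/d}$ by the same cancellation, whence $\psi_d^2(s)\in S_0(n-1,N/d)^\pm$. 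For part (1), and for the identification of the kernel in part (4): if $\gamma\in P_d\cap\Gamma_0(n,N)$ has $\psi_d^2(\gamma)=I_{n-1}$, then $t=g_d\gamma g_d^{-1}$ has the shape $\begin{pmatrix}a&0\\ c&I_{n-1}\end{pmatrix}$, but $a=\det t=\det\gamma=1$ since $\gamma\in\GL(n,\Z)$ has positive determinant, so $t\in U_0$ and $\gamma\in U_d$; conversely $U_d$ obviously lies in the kernel, and $U_dL_d^1\cap\Gamma_0(n,N)$ collapses to the same set since the $L_d^1$-component of a matrix of determinant $1$ is trivial.

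What remains is the exactness bookkeeping in part (4). The map $\psi_d^2$ is a group homomorphism, being the composite of conjugation by $g_d$ with the Levi projection $\psi_0^2$, and it lands in $\Gamma_0(n-1,N/d)^\pm$ by part (3) together with $\det\psi_d^2(\gamma)=\pm 1$ for $\gamma\in\GL(n,\Z)$. Surjectivity is the one step with genuine content: given $\bar\gamma\in\Gamma_0(n-1,N/d)^\pm$, I would take $t=\begin{pmatrix}\det\bar\gamma&0\\ c&\bar\gamma\end{pmatrix}$ with $c=(c_2,0,\dots,0)^{T}$ and set $\gamma:=g_d^{-1}t g_d$, so that $\psi_d^2(\gamma)=\bar\gamma$ and $\det\gamma=(\det\bar\gamma)^2=1$ automatically. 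Reading the first row of $\gamma$ off the conjugation formula, its entries in columns $j\ge 3$ are $-d\,\bar\gamma_{2j}$, already $\equiv 0\pmod N$ since $\bar\gamma$ has the $S_0(n-1,N/d)^\pm$ first-row property, and its column-$2$ entry is $d\,(\det\bar\gamma-\bar\gamma_{22}-d\,c_2)$, which is $\equiv 0\pmod N$ exactly when $d\,c_2\equiv\det\bar\gamma-\bar\gamma_{22}\pmod{N/d}$ --- a congruence solvable for $c_2\in\Z$ because $d$ is a unit modulo $N/d$. Then $\gamma\in\Gamma_0(n,N)$ maps to $\bar\gamma$, and the sequence is exact.

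I expect no conceptual difficulty here: the content is elementary matrix bookkeeping, which is precisely why \cite[Theorem 7]{A} can be followed almost verbatim. The only place where the argument can fail, and the only place where the hypothesis $\gcd(d,N/d)=1$ is genuinely used, is in reconciling the three competing constraints in the surjectivity step --- preserving the prescribed Levi block $\bar\gamma$, normalizing the determinant to $1$, and killing the rest of the first row modulo $N$ --- and this coprimality is exactly what lets one split each congruence modulo $N$ into its trivial part modulo $d$ and a solvable residual congruence modulo $N/d$.
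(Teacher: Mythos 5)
Your proof is correct, and it follows the same route the paper intends: the paper gives no in-text argument but defers to \cite[Theorem 7]{A}, which is exactly this kind of elementary computation of $t=g_dsg_d^{-1}$ and of the congruence/surjectivity bookkeeping, with coprimality of $d$ and $N/d$ entering only where you say it does. Your write-up simply supplies in full the matrix verification that the citation covers, so no gap.
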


In order to compute Hecke operators with respect to the Hecke pair $(S_0(3,N),\Gamma_0(3,N))$, we use the coset representatives described in the next theorem.

\begin{theorem}\label{cosets} We have the following coset decompositions of double cosets $$\Gamma_0(3,N)s\Gamma_0(3,N)=\bigcup_{g\in C}g\Gamma_0(3,N).$$

\begin{enumerate} 
\item For $s=\diag(1,1,\ell)$, with $\ell$ prime and $(\ell,N)=1$, $$C=\left\{\begin{pmatrix}1&0&0\cr 0&1&0\cr b&c&\ell\end{pmatrix},\begin{pmatrix}1&0&0\cr a&\ell&0\cr 0&0&1\end{pmatrix},\begin{pmatrix}\ell&0&0\cr 0&1&0\cr 0&0&1\end{pmatrix}:0\leq a,b,c\leq \ell-1\right\}$$
\item For $s=\diag(1,\ell,\ell)$, with $\ell$ prime and $(\ell,N)=1$,  $$C=\left\{\begin{pmatrix}1&0&0\cr a&\ell&0\cr b&0&\ell\end{pmatrix},\begin{pmatrix}\ell&0&0\cr 0&1&0\cr 0&c&1\end{pmatrix},\begin{pmatrix}\ell&0&0\cr 0&\ell&0\cr 0&0&1\end{pmatrix}:0\leq a,b,c\leq \ell-1\right\}$$
\end{enumerate}

\end{theorem}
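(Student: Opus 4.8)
The plan is to deduce the stated decomposition from the corresponding, classical, one over $\GL(3,\Z)$. Recall that $\GL(3,\Z)s\GL(3,\Z)=\coprod_{g\in C}g\GL(3,\Z)$ with $|C|=\ell^2+\ell+1$ in both cases: the right cosets $g\GL(3,\Z)$ in the double coset correspond bijectively to the sublattices $g\Z^3\subseteq\Z^3$ with $\Z^3/g\Z^3$ of the elementary divisor type of $s$, of which there are $\ell^2+\ell+1$, and the matrices of $C$ hit each of these exactly once (Hermite normal form). Granting this, the theorem reduces to the single nontrivial assertion, call it $(*)$, that every $g\in C$ lies in $\Gamma_0(3,N)s\Gamma_0(3,N)$. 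First note that the set of matrices over $\Z/N$ whose first row is $\equiv(*,0,0)$ is closed under multiplication and contains the reductions of $s$ and of every element of $\Gamma_0(3,N)$, so every element of $\Gamma_0(3,N)s\Gamma_0(3,N)$ has first row $\equiv(*,0,0)\pmod N$. Now assuming $(*)$: for each $g\in C$ the coset $g\Gamma_0(3,N)$ is contained in $\Gamma_0(3,N)s\Gamma_0(3,N)$, and these cosets are pairwise disjoint because the $g\GL(3,\Z)$ are; conversely any $g'\in\Gamma_0(3,N)s\Gamma_0(3,N)$ satisfies $g'=g\delta$ for a unique $g\in C$ and $\delta=g^{-1}g'\in\GL(3,\Z)$, and reducing modulo $N$ the first row of $\overline{g}$ is a unit multiple of $e_1$ while that of $\overline{g'}$ is $(*,0,0)$, forcing the first row of $\overline{\delta}$ to be $(*,0,0)$; since also $\det\delta=\det(g')/\det(g)=1$, we get $\delta\in\Gamma_0(3,N)$ and $g'\in g\Gamma_0(3,N)$. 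Hence $\Gamma_0(3,N)s\Gamma_0(3,N)=\coprod_{g\in C}g\Gamma_0(3,N)$, as desired.

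It remains to establish $(*)$. For the \emph{unipotent} representatives in $C$ (those with a $1$ in the $(1,1)$ entry), this is immediate: $gs^{-1}$, or $s^{-1}g$ as appropriate, is already an integral matrix of determinant $1$ with first row $e_1$, hence lies in $\Gamma_0(3,N)$, so $(*)$ holds with $\gamma_2=I$ or $\gamma_1=I$. A representative such as $\begin{pmatrix}1&0&0\\a&\ell&0\\0&0&1\end{pmatrix}$ for $s=\diag(1,1,\ell)$ needs in addition only one determinant-one permutation-type element of $\Gamma_0(3,N)$, the one moving the entry $\ell$ from the $(2,2)$ to the $(3,3)$ slot, in order to appear explicitly as $\gamma_1 s\gamma_2$. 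This accounts for all of $C$ except a single representative in each part: the \emph{permutation} element $\diag(\ell,1,1)$ when $s=\diag(1,1,\ell)$, and $\diag(\ell,\ell,1)$ when $s=\diag(1,\ell,\ell)$.

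The remaining case is the crux, and where I expect the main difficulty to lie. One must show $\diag(\ell,1,1)\in\Gamma_0(3,N)\diag(1,1,\ell)\Gamma_0(3,N)$ and, likewise, $\diag(\ell,\ell,1)\in\Gamma_0(3,N)\diag(1,\ell,\ell)\Gamma_0(3,N)$. No conjugation can work here, since a matrix conjugating $\diag(1,1,\ell)$ to $\diag(\ell,1,1)$ must have first row $(0,0,*)$, which is never $\equiv(*,0,0)\pmod N$ for an invertible matrix; one must produce a genuinely two-sided factorization inside the congruence subgroup. For $\diag(\ell,1,1)$, I would look for $\gamma_1,\gamma_2\in\Gamma_0(3,N)$ with $\gamma_1\diag(1,1,\ell)\gamma_2=\diag(\ell,1,1)$. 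Writing $M=\gamma_1^{-1}$ and comparing entries forces $\ell\mid M_{32}$ and $\ell\mid M_{33}$; then $\gamma_2=\diag(1,1,\ell)^{-1}M\diag(\ell,1,1)$ is automatically an integral matrix of determinant $1$, and the conditions $\gamma_1,\gamma_2\in\Gamma_0(3,N)$ reduce to the single requirement that the first row of $M$ be $\equiv(*,0,0)\pmod N$. So one needs $M\in\SL(3,\Z)$ whose first row is $\equiv(*,0,0)\pmod N$ and whose third row is $\equiv(*,0,0)\pmod\ell$; since $\gcd(\ell,N)=1$ these two congruences are independent, and a B\'ezout relation $\ell x-Ny=1$ yields such an $M$ at once, for instance $M=\begin{pmatrix}x&0&Ny\\0&1&0\\1&0&\ell\end{pmatrix}$. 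The case $s=\diag(1,\ell,\ell)$ is entirely parallel, a congruence condition on a column of $M$ replacing the one on the third row.

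So the only genuinely delicate step is the construction just sketched: it is the one place where the naive conjugation argument breaks down, and where the hypothesis $\gcd(\ell,N)=1$ (a consequence of the conductor being squarefree, via $\ell\nmid N$) is used in an essential way. Everything else is a mixture of classical $\GL(3,\Z)$-coset combinatorics and direct, if mildly tedious, matrix computations for the individual members of $C$. One small bookkeeping remark: for the decomposition with $s=\diag(1,\ell,\ell)$ to be internally consistent, every term having determinant $\ell^2$, the middle family of representatives in $C$ must carry an $\ell$, not a $1$, in one of its diagonal slots.
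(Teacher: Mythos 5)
Your overall strategy is sound and genuinely different from the paper's: the paper verifies that the listed matrices lie in the double coset, that they are pairwise inequivalent mod $\Gamma_0(3,N)$, and then counts cosets, whereas you inherit disjointness and completeness from the classical $\GL(3,\Z)$ decomposition via the mod-$N$ first-row argument, so that only the membership statement $(*)$ remains. That reduction is correct, your B\'ezout construction for $\diag(\ell,1,1)\in\Gamma_0(3,N)\diag(1,1,\ell)\Gamma_0(3,N)$ checks out (with $M=\begin{pmatrix}x&0&Ny\cr0&1&0\cr1&0&\ell\end{pmatrix}$, $\ell x-Ny=1$, one gets $\gamma_1=M^{-1}$, $\gamma_2=s^{-1}Mg$ both in $\Gamma_0(3,N)$), and you are right that the printed middle family in part (2) is misprinted: it must be $\begin{pmatrix}\ell&0&0\cr0&1&0\cr0&c&\ell\end{pmatrix}$, $0\le c\le\ell-1$ (this is the form actually used later in the paper's Hecke computations).

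The gap is in your accounting of which representatives still need the two-sided argument. You claim that the easy devices (left or right division by $s$, plus a permutation-type element of $\Gamma_0(3,N)$) handle everything except $\diag(\ell,1,1)$ and $\diag(\ell,\ell,1)$. That is true in part (1), because there the middle family has diagonal $(1,\ell,1)$ and a signed permutation swapping the last two coordinates lies in $\Gamma_0(3,N)$ and conjugates $\diag(1,1,\ell)$ to $\diag(1,\ell,1)$. But in part (2) the (corrected) middle family $\begin{pmatrix}\ell&0&0\cr0&1&0\cr0&c&\ell\end{pmatrix}$ has diagonal $(\ell,1,\ell)$: neither $gs^{-1}$ nor $s^{-1}g$ is integral, and no permutation-type element of $\Gamma_0(3,N)$ can help, since any monomial matrix with first row $\equiv(*,0,0)\pmod N$ fixes the first coordinate (up to sign) and conjugating $\diag(1,\ell,\ell)$ by permutations of the last two coordinates changes nothing. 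So these $\ell$ representatives are not ``accounted for'' as claimed; they require the same crux construction. The fix is immediate with your own tools: write $\begin{pmatrix}\ell&0&0\cr0&1&0\cr0&c&\ell\end{pmatrix}=\begin{pmatrix}1&0&0\cr0&1&0\cr0&c&1\end{pmatrix}\diag(\ell,1,\ell)$ and prove $\diag(\ell,1,\ell)\in\Gamma_0(3,N)\diag(1,\ell,\ell)\Gamma_0(3,N)$ by the same recipe, now with the divisibility condition on the second column of $M$ (e.g. $M=\begin{pmatrix}x&Ny&0\cr1&\ell&0\cr0&0&1\end{pmatrix}$ with $x\ell-Ny=1$). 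With that addition, and with the misprint corrected, your argument is complete.
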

\begin{proof} One checks that the given elements are all in the double coset, and that none are in the same coset of $\Gamma_0(3,N)$.  Because the cardinality of $C$ is equal to the number of cosets of $\Gamma_0(3,N)$ in the double coset, they must form a complete set of coset representatives.
\end{proof}

The next theorem is adapted to the following situation:  Using the notations of Section~\ref{g-sets}, let $G=\GL(3,\Q)$, $X=\PP^2(\Q)$, $\Gamma=\Gamma_0(3,N)$.  
We assume $N$ is squarefree, in which case, as proved in \cite{A},
the $\Gamma$-orbits of $X$ may be represented by the set 
$$
A=\{(1:d:0)\ | \ d>0,\ \ d|N \}.
$$
Also, any $s$ as in Theorem~\ref{cosets} takes each $\Gamma$-orbit to itself.  For each $s$ the following theorem gives a $\gamma$ such that $(1:d:0)s\gamma=(1:d:0)$.  Thus $g_{ab}=0$ unless $a=b$ and then $g_{aa}=s\gamma$, where $a=(1:d:0)$.  The theorem also gives the values of $\psi_0^i,i=1,2$ which we will need to compute the action  of $\Gamma_a$ on the $S$-sheaves we deal with in Sections~\ref{spec} and~\ref{redgal}.

\begin{theorem}\label{calcx} Let $\ell$ be a prime not dividing $N$, let $d$ be a divisor of $N$ with $(d,N/d)=1$ and let $s$ be a matrix of the form
$$s=\begin{pmatrix}\ell_1&0&0\cr a&\ell_2&0\cr b&c&\ell_3\end{pmatrix}$$
that is in one of the sets $C$ in Theorem~\ref{cosets}.  Then there exists a $\gamma\in\Gamma_0(3,N)$ of the form
$$\gamma=\begin{pmatrix}A&B&0\cr C&D&0\cr 0&0&1\end{pmatrix}$$
such that $s\gamma\in P_d=g_d^{-1}P_0g_d$, and for $x=g_ds\gamma g_d^{-1}\in P_0$ we have
\begin{enumerate}
\item If $\ell_1=\ell_2$ and $a=0$, then $x_{11}=\ell_1$ and
$$\psi_0^2(x)=\begin{pmatrix}\ell_2&0\cr c-bd&\ell_3\end{pmatrix},$$
\item If $\ell_1=\ell$, $\ell_2=1$ and $a=0$, then $x_{11}=1$ and
$$\psi_0^2(x)=\begin{pmatrix}\ell&0\cr -bd+c\ell&\ell_3\end{pmatrix}.$$
\item If $\ell_1=1$, $\ell_2=\ell$, and $\ell\nmid ad+1$, then $x_{11}=1$ and
$$\psi_0^2(x)=\begin{pmatrix}\ell&0\cr -b\ell d+c(ad+1)&\ell_3\end{pmatrix}.$$
\item If $\ell_1=1$, $\ell_2=\ell$, and $\ell| ad+1$, then $x_{11}=\ell$ and
$$\psi_0^2(x)=\begin{pmatrix}1&0\cr -bd+c\frac{ad+1}{\ell}&\ell_3\end{pmatrix}.$$
\end{enumerate}
\end{theorem}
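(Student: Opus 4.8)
The plan is to reduce the statement to an explicit $2\times2$ computation, handle the four cases, and then read off $x_{11}$ and $\psi_0^2(x)$ from a single matrix product. Write $g_d=\begin{pmatrix}1&d&0\\0&1&0\\0&0&1\end{pmatrix}$ and, for a candidate $\gamma=\begin{pmatrix}A&B&0\\C&D&0\\0&0&1\end{pmatrix}$, set $\beta=B-dA$ and $\delta=D-dC$. A direct computation of $x:=g_ds\gamma g_d^{-1}$ gives $x_{13}=x_{23}=0$, $x_{33}=\ell_3$, first column (top to bottom) $\big((\ell_1+da)A+d\ell_2C,\ aA+\ell_2C,\ bA+cC\big)$, and middle column $\big((\ell_1+da)\beta+d\ell_2\delta,\ a\beta+\ell_2\delta,\ b\beta+c\delta\big)$. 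Hence $x\in P_0$, i.e.\ $s\gamma\in P_d$, if and only if
$$(\ell_1+da)\,\beta+d\ell_2\,\delta=0,$$
and when this holds $x_{11}=(\ell_1+da)A+d\ell_2C$ and $\psi_0^2(x)=\begin{pmatrix}a\beta+\ell_2\delta&0\\ b\beta+c\delta&\ell_3\end{pmatrix}$. Finally $\det\gamma=AD-BC=A\delta-C\beta$ and the $(1,2)$-entry of $\gamma$ is $B=\beta+dA$, so $\gamma\in\Gamma_0(3,N)$ precisely when $A\delta-C\beta=1$ and $N\mid\beta+dA$.

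With this in hand I would argue case by case. Case~(1) ($\ell_1=\ell_2$, $a=0$) is immediate: there $(1,d,0)s=\ell_1\,(1,d,0)$, so $\gamma=I$ works ($\beta=-d$, $\delta=1$), and the formulas above give $x_{11}=\ell_1$ and $\psi_0^2(x)=\begin{pmatrix}\ell_2&0\\ c-bd&\ell_3\end{pmatrix}$. In each remaining case the linear equation $(\ell_1+da)\beta+d\ell_2\delta=0$, together with $\gcd(\ell,d)=1$ (valid as $\ell\nmid N$ and $d\mid N$) and, in Cases~(3)--(4), the stated divisibility hypothesis relating $\ell$ and $ad+1$, forces $(\beta,\delta)$ to be a specific primitive vector up to sign; explicitly one takes $(\beta,\delta)=(-d,\ell)$ in Case~(2), $(\beta,\delta)=(-d\ell,\,ad+1)$ in Case~(3), and $(\beta,\delta)=(-d,\,(ad+1)/\ell)$ in Case~(4). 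For such $(\beta,\delta)$ the requirement $A\delta-C\beta=1$ is a Bézout identity with coprime coefficients and so has solutions $(A,C)$; the remaining work is to slide $(A,C)$ along its line of solutions until in addition $A$ lies in the residue class modulo $N/d$ that makes $N\mid\beta+dA$. This is the one step needing an argument: the two congruence conditions on $A$ have coprime moduli --- the Bézout modulus ($d$ or $d\ell$) and $N/d$ --- because $\gcd(d,N/d)=1$ and $\gcd(\ell,N)=1$, so the Chinese Remainder Theorem yields a simultaneous solution, hence an honest $\gamma\in\Gamma_0(3,N)$ of the prescribed block shape. I expect this CRT bookkeeping to be the main (indeed essentially the only) obstacle; everything else is mechanical.

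To finish, I would substitute the chosen $(\beta,\delta)$ back into $x_{11}=(\ell_1+da)A+d\ell_2C$ and $\psi_0^2(x)=\begin{pmatrix}a\beta+\ell_2\delta&0\\ b\beta+c\delta&\ell_3\end{pmatrix}$, using $A\delta-C\beta=1$ to collapse $x_{11}$; this reproduces the four displayed answers. For instance, in Case~(3) the relation $A(ad+1)+Cd\ell=1$ gives $x_{11}=(ad+1)A+d\ell C=1$, while $a\beta+\ell_2\delta=-ad\ell+\ell(ad+1)=\ell$ and $b\beta+c\delta=-bd\ell+c(ad+1)$; Cases~(2) and~(4) are entirely analogous. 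A final remark records that each of the six coset representatives in Theorem~\ref{cosets} satisfies the hypotheses of exactly one of the four cases (these are pairwise incompatible since $\ell\neq1$), so Theorem~\ref{calcx} provides precisely the $g_{ab}=s\gamma$ and the values of $\psi_0^2$ needed in Section~\ref{preparing}.
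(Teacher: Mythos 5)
Your proposal is correct and follows essentially the same route as the paper's appendix: both reduce the condition $s\gamma\in P_d$ to an explicit linear condition on the entries of $\gamma$, produce $\gamma$ by a B\'ezout/CRT argument resting on $\gcd(d,N/d)=1$ and $\gcd(\ell,N)=1$, and then read off $x_{11}$ and $\psi_0^2(x)$ from the matrix product. Your substitution $(\beta,\delta)=(B-dA,\,D-dC)$ simply repackages the paper's equation $(\ell_1+ad,\ d\ell_2)\begin{pmatrix}A&B\cr C&D\end{pmatrix}=r(1,d)$ and its case-by-case solution in a slightly more uniform way, with $r=x_{11}$ emerging from the B\'ezout identity rather than from primitivity.
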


We give the proof of this theorem in the appendix.

\section{Irreducible representations}\label{irreducible}
Let $B_m$ be the Borel subgroup of $\GL(m)$ consisting of upper triangular matrices, and let $T_m$ be the maximal torus of diagonal matrices.  An algebraic weight with respect to the pair $(B_m,T_m)$ is an $m$-tuple of integers $(a_1,\ldots,a_m)$ which represents the map $\diag(t_1,\ldots t_m)\mapsto t_1^{a_1}\cdots t_m^{a_m}$.  This weight is dominant if $a_1\geq a_2\geq\cdots\geq a_m$.

A dominant weight is said to be  { $p$-restricted} if $0\leq a_i-a_{i+1}\leq p-1$ for $1\leq i< m$ and $0\leq a_m\leq p-2$.  For any $p$-restricted weight there exists a unique (up to isomorphism) irreducible right $\F_p[\GL(m,\bar\F_p)]$-module $F(a_1,\ldots,a_m)$ with highest weight $(a_1,\ldots,a_m)$ which remains irreducible when restricted to $\F_p[\GL(m,\Z/p\Z)]$, and all such irreducible  $\F_p[\GL(m,\Z/p\Z)]$-modules occur this way \cite[p. 412]{Doty-Walker}.  These modules are, in fact, absolutely irreducible \cite[Corollary II.2.9]{J2}.  We will relax the condition that $0\leq a_m\leq p-2$, and allow $a_m$ to be arbitrary,  stipulating that we can adjust the entire $m$-tuple by adding the same multiple of $p-1$ to each entry without changing the corresponding module.  This is equivalent to tensoring with the $(p-1)$ power of the determinant, which changes the module over $\F_p[\GL(m,\bar\F_p)]$, but not over $\F_p[\GL(m,\Z/p\Z)]$.

In a similar way, all the irreducible $\F_p[\GL(1,\Z/p\Z)\times \GL(m-1,\Z/p\Z)]$-modules are classified by $m$-tuples $(a_1,\ldots,a_m)$ such that $(a_2,\ldots,a_m)$ is $p$-restricted for $\GL(m-1)$ and $a_1$ is considered modulo $p-1$.  We will denote the $\F_p[\GL(1,\Z/p\Z)\times \GL(m-1,\Z/p\Z)]$-module corresponding to $(a_1,\ldots,a_m)$ by $M(a_1;a_2,\ldots,a_m)$.  We note that this module is just $F(a_2,\ldots,a_m)$ as an $\F_p[\GL(1,\Z/p\Z)\times \GL(m-1,\Z/p\Z)]$-module, with $\GL(1,\Z/p\Z)$ acting as scalars via the $a_1$-power map.

Note that we will also consider any $\GL(m,\Z/p\Z)$-module as a $\GL(m,\Z_p)$-module via reduction modulo $p$.  In this way, we make any $\GL(m,\Z/p\Z)$-module into an $S_0(m,N)$-module.  Note that if $N$ and $p$ are relatively prime, then the image of $S_0(m,N)$ under reduction modulo $p$ is all of $\GL(m,\Z/p\Z)$.

Given a  $\GL(m,\Z_p)$-module $E$, we will denote the action of $s\in\GL(m,\Z_p)$ by $e|s$.  We denote by $E^x$ the module with the same underlying abelian group as $E$, but with $s\in\GL(m,\Z_p)$ acting via $e|^xs=e|g_xsg_x^{-1}$.  We note that if $E=F(a_1,\ldots,a_m)$ is an irreducible module, then $E^x$ is also isomorphic to $F(a_1,\ldots,a_m)$, since it is irreducible of the same dimension, with the same highest weight viewed as an $\F_p[\GL(m,\bar\F_p)]$-module (since conjugation by $g_x$ does not introduce any new torus characters).  For a Dirichlet character $\chi$ of conductor $N$,  we denote by $E^x_\chi$ the $S_0(n,N)$ module with the same underlying abelian group as $E$ but with the action of $s$ given by $e|^x_\chi s=\chi(s)e|g_xsg_x^{-1}$, where $\chi(s)$ is defined as $\chi(s_{11})$.

\begin{theorem}\label{U-inv}
Let $n\geq 2$, let $(a_1,\ldots,a_n)$ be a p-restricted weight, and set $F=F(a_1,\ldots,a_n)$.  Let $\chi$ be a Dirichlet character of conductor $N$, which factors as $\chi_0\chi_1$, where $\chi_0$ has conductor $d$, and $\chi_1$ has conductor $N/d$ with $(d,N/d)=1$. Then 

(1) The module of invariants $F^{U_0(\Z/p)}$ is isomorphic to $M(a_n;a_1,\ldots,a_{n-1})$.

(2) The module $F^{U_d\cap S_0(n,N)}$ considered as a $P_d\cap S_0(n,N)$-module is isomorphic to $(F^{U_0})^d$.

(3) The action of $P_d\cap S_0(n,N)$ on the module $F_\chi^{U_d\cap S_0(n,N)}$ is given by
$$e|s=\chi_0(\psi_d^1(s))(\psi_d^1(s))^{a_n}\chi_1(\psi_d^2(s))e|(\psi_d^2(s)),$$
where the vertical bar on the right denotes the action of $\GL(n-1,\Z_p)$ on $F(a_1,\ldots,a_{n-1})$.  

\end{theorem}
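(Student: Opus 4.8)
The three parts are proved in the order given: part (1) is the substantive one, while (2) and (3) are transport of structure along conjugation by $g_d$ together with bookkeeping involving the maps $\psi_d^i$ and the character $\chi$. For part (1), the key structural fact is that $U_0$ is the unipotent radical of the parabolic $P_0$ and is normalized by it, so that $F^{U_0(\Z/p)}$ is automatically a module for the Levi $L_0^1\times L_0^2\cong\GL(1)\times\GL(n-1)$ on which $U_0(\Z/p)$ acts trivially. The heart of the matter is to show that this Levi module is irreducible of the expected highest weight. Over the algebraic group, the irreducibility of $F^{U_0(\bar\F_p)}$ as an $L_0^1\times L_0^2$-module is Smith's theorem on invariants under the unipotent radical of a parabolic, and computing the extremal weight that occurs identifies it with $M(a_n;a_1,\ldots,a_{n-1})$. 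The passage from $U_0(\bar\F_p)$- to $U_0(\Z/p)$-invariants is exactly where the $p$-restriction $0\le a_i-a_{i+1}\le p-1$ is used: it bounds the relevant polynomial degrees below $p$, so invariance under all translations by $\F_p$ already forces genuine (algebraic) invariance. Carrying this out rigorously requires pinning down an explicit model of the abstractly defined module $F=F(a_1,\ldots,a_n)$ — for instance inside a tensor product of exterior powers of the standard representation — and this is the main obstacle; once part (1) is in hand, parts (2) and (3) are formal.

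For part (2), I would first determine the image $\overline{U_d\cap S_0(n,N)}$ in $\GL(n,\Z/p)$. Writing $u=g_d^{-1}u_0g_d$ with $u_0\in U_0$ having first column $(1,v_2,\ldots,v_n)^t$, one checks directly that $u$ is integral iff $v_2,\ldots,v_n\in\Z$, and then lies in $S_0(n,N)$ iff in addition $v_2\equiv 0\pmod{N/d}$ (using $\gcd(d,N/d)=1$); since $\gcd(N,p)=1$, reduction modulo $p$ then surjects onto $U_0(\Z/p)$, so $\overline{U_d\cap S_0(n,N)}=\bar g_d^{-1}U_0(\Z/p)\bar g_d$. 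Because $F$ is a $\GL(n,\Z/p)$-module, it follows that $F^{U_d\cap S_0(n,N)}=(F^{U_0(\Z/p)})\bar g_d$ as a subspace of $F$, so $e\mapsto e|g_d$ is a linear isomorphism $F^{U_0}\to F^{U_d\cap S_0(n,N)}$. A one-line computation with associativity of the action shows that this map intertwines $e|^ds=e|g_dsg_d^{-1}$ on the source with the restricted $S_0(n,N)$-action on the target, which is exactly the claim $F^{U_d\cap S_0(n,N)}\cong(F^{U_0})^d$ as $P_d\cap S_0(n,N)$-modules.

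For part (3), using (2) and (1) I would identify $F^{U_d\cap S_0(n,N)}$ with $M(a_n;a_1,\ldots,a_{n-1})^d$, on which $U_d$ acts trivially and $s$ acts through $g_dsg_d^{-1}\in P_0$. Since the $P_0$-action on $M(a_n;a_1,\ldots,a_{n-1})$ factors through $P_0/U_0=L_0^1\times L_0^2$, with $\psi_0^1(g)$ acting as its $a_n$-th power and $\psi_0^2(g)$ acting as on $F(a_1,\ldots,a_{n-1})$, and since $\psi_0^i(g_dsg_d^{-1})=\psi_d^i(s)$ by definition, the untwisted action is $e|s=(\psi_d^1(s))^{a_n}\,e|(\psi_d^2(s))$. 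To put in the nebentype, note that on $F_\chi$ the character $\chi$ is trivial on $U_d\cap S_0(n,N)$ (there $u_{11}=1-v_2d\equiv 1\pmod N$), so the invariant subspace is unchanged, and by the congruences of Theorem~\ref{psid}(2), together with the fact that $\chi_0$ has conductor $d$ and $\chi_1$ has conductor $N/d$, one gets $\chi(s_{11})=\chi_0(s_{11})\chi_1(s_{11})=\chi_0(\psi_d^1(s))\chi_1(\psi_d^2(s))$. Multiplying the action above by this scalar gives the stated formula.
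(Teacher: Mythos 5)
Parts (2) and (3) of your argument are sound and are essentially the paper's: the paper phrases (2) through the twisted modules $E^x$ (noting that $U_d\cap S_0(n,N)$ and $U_d$ have the same image mod $p$, that $F^{U_d}=((F^{-d})^{U_0})^d$, and that $F^{-d}\cong F$), which is your explicit conjugation-by-$g_d$ computation in different notation; and (3) is identical, down to the use of Theorem~\ref{psid}(2) to split $\chi(s_{11})=\chi_0(\psi_d^1(s))\chi_1(\psi_d^2(s))$. Your check that $\chi$ is trivial on $U_d\cap S_0(n,N)$, so that twisting by $\chi$ does not change the invariant subspace, is a worthwhile point that the paper leaves implicit.

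Part (1), however, is not actually proved. You reduce it to Smith's theorem for the algebraic group plus the passage from $U_0(\bar\F_p)$-invariants to $U_0(\Z/p)$-invariants, and you justify the latter by saying the $p$-restriction ``bounds the relevant polynomial degrees below $p$.'' That degree claim is false in general: $p$-restriction bounds the consecutive differences $a_i-a_{i+1}$, but the root subgroups filling $U_0$ pair the first coordinate against the $j$-th, and for $n\ge 3$ these are non-simple roots whose root strings in $F(a_1,\ldots,a_n)$ can have length up to $a_1-a_n\le(n-1)(p-1)\ge p$ (e.g.\ for the Steinberg weight), so the one-parameter actions are polynomials of degree $\ge p$ and no Lagrange-interpolation argument applies. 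The statement you need --- that for a $p$-restricted irreducible, invariants under the $\F_p$-points of the unipotent radical of a parabolic equal the algebraic invariants and form the irreducible Levi module of the expected weight --- is exactly the finite-group form of Smith's theorem, which is what the paper cites (\cite[Proposition 5.10]{Humphreys}, \cite[Lemma 2.5]{H1}). You also leave the identification of the extremal weight unfinished, explicitly calling the needed explicit model of $F$ ``the main obstacle.'' The paper needs no explicit model: since $U_0$ is the unipotent radical of the parabolic opposite to the standard one, it applies the automorphism $A(g)={}^tg^{-1}$ and contragredients, writing $F^{U_0}\cong((F^\vee)^{A(U_0)})^\vee$ with $F^\vee\cong F(-a_n,\ldots,-a_1)$, and reads off $M(a_n;a_1,\ldots,a_{n-1})$ from the cited result. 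Replacing your degree argument and the deferred weight computation by this citation (or by the same transpose-dual reduction) is what is required to close the gap.
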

\begin{proof} 

(1) Set $U=U_0(\Z/p)$, and $L=P_0/U=L_0^1(\Z/p)\times L_0^2(\Z/p)$.  Let $A$ be the outer automorphism of $\GL(n)$ given by $A(g)={}^tg^{-1}$.  The contragredient of $F$, or $F^\vee$ is a $\GL(n,\Z_p)$-module with the same underlying abelian group as $F$, but with the action given by $f|^\vee g=f|A(g)$.  We note that as $\GL(n,\Z_p)$-modules, the contragredient and the dual, $\Hom(F,\bar\F_p)$ are isomorphic.

As $L$-modules, we have that $F^U\cong ((F^\vee)^{A(U)})^\vee$.  Then by \cite[Proposition 5.10]{Humphreys} (see also \cite[Lemma 2.5]{H1}), we see that
$$F^U\cong (F(-a_n,\ldots,-a_1)^{A(U)})^\vee\cong M(-a_n;-a_{n-1}\ldots,-a_1)^\vee\cong M(a_n;a_1,\ldots,a_{n-1}).$$

(2) Since $N$ and $p$ are relatively prime, $U_d\cap S_0(n,N)$ and $U_d$ have the same image in $\GL(n,\Z/p\Z)$, so we need only consider $F^{U_d}$.  We see that $F^{U_d}=((F^{-d})^{U_0})^d$.  However, as described above, $F^{-d}\isom F\isom F(a_1,\ldots,a_n)$.  Hence, by part (1), we find that $F^{U_d}\isom M(a_n;a_1,\ldots,a_{n-1})^d$.

(3) We identify $F^{U_d\cap S_0(n,N)}$ with $M(a_n;a_1,\ldots,a_{n-1})^d$.  Then, for $e\in F^{U}$, we have
\begin{align*}
e|^d_\chi s&=\chi(s)e|g_dsg_d^{-1}\cr&=\chi(s)e|(\psi_d^1(s)\times\psi_d^2(s))\cr&=\chi(s)(\psi_d^1(s))^{a_n}e|\psi_d^2(s)\cr&=\chi_0(\psi_d^1(s))\chi_1(\psi_d^2(s))(\psi_d^1(s))^{a_n}e|\psi_d^2(s)\end{align*}
since $e\in F^U\cong M(a_n;a_1,\ldots,a_{n-1})$ as a $\GL(1)\times\GL(n-1)$-module and 
$$\chi(s)=\chi(s_{11})=\chi_0(s_{11})\chi_1(s_{11})=\chi_0(\psi_d^1(s))\chi_1(\psi_d^2(s))$$ by Theorem~\ref{psid}(2).

\end{proof}

\section{Points in general position}
Let $K$ be an infinite field, $V$ an $n$-dimensional $K$-vector space, and denote the projective space by $\mathbb P=\mathbb P(V)$.  Fix a basis of $V$.  Given points $a_1,\ldots,a_r\in \mathbb P$, we say that the points are in general position if any subset of them spans a linear space of maximal possible dimension.  We define a simplicial complex $Y^g=Y^g(K)$ as follows.  The vertices of $Y^g$ are points in $\mathbb P$, and are acted upon by $\GL_n(K)$ on the right.  The $p$-simplices of $Y^g$ are spanned by $(p+1)$-tuples of vertices that are in general position. We let $X^g$ denote the chain complex of oriented chains on $Y^g$.  The augmentation is the map $\varepsilon:X^0\to\Z$ that sends each vertex to $1$. If $x_{0},\ldots,x_{p}$ are vertices spanning a $p$-simplex $\Delta$ in $X^g$, we denote by $\vec x=(x_{0},\ldots,x_{p})$ the chain supported on $\Delta$ with coefficient $1$.  It is antisymmetric in the arguments.  Then $X^g_p$ is generated over $\Z$ by these basic chains $\vec x$.

\begin{theorem} If $K$ is infinite, then $X^g$ is an acyclic resolution of $\Z$ by $\GL(n,K)$-modules.
\end{theorem}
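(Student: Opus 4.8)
The plan is to verify two things: that each term $X^g_p$ is a right $\Z[\GL(n,K)]$-module, and that the augmented chain complex $\cdots\to X^g_1\to X^g_0\to\Z\to 0$, with last map the augmentation $\varepsilon$, is exact. The module structure is routine: $\GL(n,K)$ acts on $\PP=\PP(V)$ on the right and carries any tuple of points in general position to another such tuple, since it preserves linear (in)dependence of the underlying vectors; hence it permutes the basic chains $\vec x$ up to the signs coming from reordering their vertices, so $X^g_p$ is a right $\Z[\GL(n,K)]$-module (indeed a direct sum, over the $\GL(n,K)$-orbits of $p$-simplices, of modules induced from the orientation-twisted actions of the stabilizers of representative simplices). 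The augmentation $\varepsilon$ is $\GL(n,K)$-equivariant and clearly surjective, so the whole statement reduces to proving that every cycle in $X^g_p$ is a boundary for $p\ge 1$, and that $\ker\varepsilon$ is the image of $\partial\colon X^g_1\to X^g_0$.

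For exactness I would use a ``generic cone'' argument; the case $n=1$ is trivial (then $\PP$ is a single point and $X^g$ is $\Z$ concentrated in degree $0$), so assume $n\ge 2$. Let $z$ be a cycle in $X^g_p$ for some $p\ge 0$, with $\varepsilon(z)=0$ if $p=0$. Being a finite $\Z$-combination of basic chains, $z$ is supported on a finite set $S\subset\PP$ of vertices. Consider the finitely many projective subspaces $\langle T\rangle$, where $T\subseteq S$ ranges over the subsets of size at most $n-1$ whose underlying vectors are linearly independent; each such $\langle T\rangle$ is a proper subspace of $\PP$. Since $K$ is infinite, $\PP(K)$ is not a union of finitely many proper linear subspaces, so we may choose a vertex $v\in\PP$ lying on none of them; taking $T$ a singleton shows in particular that $v\notin S$.

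The key point is that coning $z$ off at $v$ stays inside $Y^g$. Indeed, for any $p$-simplex $\Delta=\{x_0,\ldots,x_p\}$ in the support of $z$, the set $\{v,x_0,\ldots,x_p\}$ is again in general position: a subset of size at most $n$ not containing $v$ lies in $\Delta$, hence has linearly independent underlying vectors because $\Delta$ is a simplex of $Y^g$; and a subset of size at most $n$ containing $v$ has the form $\{v\}\cup T$ with $T\subseteq\Delta$ of size at most $n-1$ and $v\notin\langle T\rangle$ by the choice of $v$, so it too is independent. Thus, writing $v\ast\vec x=(v,x_0,\ldots,x_p)$ and extending $\Z$-linearly, $w:=v\ast z$ is a well-defined element of $X^g_{p+1}$, and the usual cone identity $\partial(v\ast z)=z-v\ast(\partial z)$ gives $\partial w=z$ when $p\ge 1$ (since $\partial z=0$), while for $p=0$ it gives $\partial w=z-\varepsilon(z)(v)=z$. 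Hence $z$ is a boundary, which is exactly the exactness we want.

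The only genuine obstacle is the key point above — that a finitely supported cycle can be coned off by a point generic enough to keep every resulting simplex in general position — together with the standard fact that a vector space over an infinite field is not a finite union of proper subspaces. Once these are in hand, checking the cone identity (with its signs and the degree-$0$ bookkeeping) and the $\GL(n,K)$-module structure is mechanical, so I would present the generic-cone construction carefully and keep those routine verifications brief.
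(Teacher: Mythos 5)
Your proof is correct and follows essentially the same route as the paper: cone a finitely supported cycle off a point chosen in general position with respect to its support, which is exactly the paper's argument. You simply make explicit two details the paper leaves implicit (the existence of the generic vertex via the fact that $\PP(K)$ is not a finite union of proper linear subspaces when $K$ is infinite, and the degree-$0$/augmentation bookkeeping), which is fine.
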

\begin{proof} Let $\vec x_i=(x_{i0},\ldots,x_{ip})$ be a basic chain supported on a $p$-simplex for each $i$, and suppose that $z=\sum_{i}c_i\vec x_i$ is a cycle, i.e. a $p$-chain that is taken to zero by the boundary map.

Since $K$ is infinite, we may choose a point $y\in \mathbb P$ such that for each $i$, the set $\{x_{i0},x_{i1},\ldots,x_{ip},y\}$ is in general position.  Let $y_i=( x_{i0},x_{i1},\ldots,x_{ip},y)$.

Then 
\begin{align*}
\partial\sum_ic_iy_i&=\sum_ic_i\partial y_i\cr
&=\sum_i\sum_{j=0}^p(-1)^{j+1}c_i(x_{i0},\ldots,\widehat x_{ij},\ldots,x_{ip},y)+(-1)^{p+2}\sum_ic_i(x_1,\ldots,x_p,\widehat y)\cr
&=0\pm z,\end{align*}
where the double sum is 0 since $z$ is a cycle.  Adjusting the sign, we see that every cycle is a boundary. 
\end{proof}

\section{A spliced sharbly complex}
In this section, $K$ is an infinite field.
We are going to splice the complex $X^g$ with the sharbly complex.  Recall the Steinberg module $St_n$ and the sharbly complex $Sh^n$ for an $n$-dimensional $K$-vector space $V$ as described for example in~\cite{AGMV}. If $v_1,\dots,v_{n+k}$ are vectors in $V$, denote by $[v_1,\dots,v_{n+k}]$ the basic $k$-sharbly.  It is antisymmetric in the arguments, it doesn't change if any argument is multiplied by a non-zero element of $K$, and it vanishes if the arguments do not span $V$ over $K$.  An element $g\in\GL(n,K)$ acts on a basic $k$-sharbly by $[v_1,\ldots,v_{n+k}]g=[v_1g,\ldots,v_{n+k}g]$.  The $\Z$-span of the basic $k$-sharblies, subject to these relations, is by definition $Sh^n_k(V)$.  The boundary map
$Sh^n_{k+1}(V) \to Sh^n_k(V)$ is given by $[v_1,\dots,v_{n+k+1}] \mapsto 
\sum_i (-1)^i[v_1,\dots,\hat v_i, \dots v_{n+k+1}]$.

The module $St_n(V)$, which is free as a $\mathbb Z$-module, is isomorphic as a $\GL(n,K)$-module to the cokernel of 
the boundary map $Sh^n_1(V)\to Sh^n_0(V)$.
For $[v_1,\ldots,v_n]\in Sh^n_0(V)$, we denote the image of $[v_1,\ldots,v_n]$ in the cokernel $St_n(V)$ by $\{v_1,\ldots,v_n\}$.

If $K=\Q$,
Borel-Serre duality, as improved by Brown \cite[X.3.6]{B}, gives us Hecke-equivariant isomorphisms $H_i(\Gamma,St_n\otimes M) \approx H^{n(n-1)/2-i}(\Gamma, M)$ for any subgroup of finite index $\Gamma \subset \GL(n,\Z)$ and any $S$-module $M$ on which $(n+1)!$ acts invertibly.

We know that 
$$
\dots \to Sh^n_i(V) \to Sh^n_{i-1}(V) \to Sh^n_0(V) \to St_n(V) \to 0
$$
is an exact sequence of $\GL(n,K)$-modules.

Sending 
$(v_1,\dots,v_{n+k})$ to $[v_1,\dots,v_{n+k}]$ defines an injective map of   $\GL(n,K)$-modules 
$\iota: X^g_{n+k-1}\to Sh_k$.  Note that $\iota$ induces an isomorphism 
$X^g_{n-1} \approx Sh_0$.

We now set $n=3$ and $V=K^3$, dropping the $n$ and the $V$ from the notation $Sh^n_i(V)$. 
Define a new complex $X$ of $\GL(3,K)$-modules as follows.  For $i\ge 2$, $X_i=Sh_{i-2}$ and the boundary map $X_{i+1}\to X_i$ is the same as in the sharbly complex.  We define $X_0=X^g_0$ with the same augmentation map $\varepsilon:X_0\to\Z$.  It remains to define $X_1$ and the boundary maps $X_2\to X_1\to X_0$.

Let $\mathbb P^\ast$ denote the set of planes in $K^3$.  We set 
$X_1=\bigoplus_{H\in \mathbb P^\ast} St_2(H)$.  An element $g\in \GL(3,K)$ acts on $\{a,b\}\in X_1$ by sending it to $\{ag,bg\}$.  

Note that $X_2$ is generated freely over $\Z$ by ``generic sharblies", i.e. $[a,b,c]$ such that the determinant of the matrix with rows $a,b,c$ is nonzero.  Define the boundary map $\partial_2:X_2\to X_1$ by $[a,b,c]\mapsto \{a,b\}+\{b,c\}+\{c,a\}$ for any generic $[a,b,c]$.  It is well-defined and is $\GL(3,K)$-equivariant.
We define the boundary map
$\partial_1:X_1\to X_0$ by $\{a,b\}\mapsto (b)-(a)$.  

\begin{lemma}
Let $R$ be the submodule of $X^g_1$ generated by
$$
\{(a,b)+(b,c)+(c,a)\ | \ H \in \mathbb P^\ast, \ a,b,c\in H, \ (a,b,c)\in X^g_2(H)\}.
$$
For $H\in \mathbb P^\ast$ and $a,b\in H$, let $\psi(a,b)=\{a,b\}\in St_2(H)$.
Then

(a) The map $\psi: X^g_1\to X_1$ induces an isomorphism of $\GL(3,K)$-modules
$\phi:X^g_1/R\to X_1$.

(b) 
The boundary map in the generic complex $X^g_1\to X^g_0=X_0$ induces the 
$\GL(3,K)$-equivariant boundary map
$\partial_1:X_1\to X_0$ after identifying $X_1$ with $X^g_1/R$ via $\phi$. 

\end{lemma}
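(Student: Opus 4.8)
The plan is to establish both parts by reducing everything, one plane at a time, to the definition of the Steinberg module $St_2$. For part (a), begin with the observation that two distinct points of $\mathbb P(V)$ span a unique plane $H\in\mathbb P^\ast$. Consequently $X^g_1$ splits canonically as $\bigoplus_{H\in\mathbb P^\ast}M_H$, where $M_H$ (say) is the free $\Z$-module on antisymmetric oriented pairs of distinct points of $\mathbb P(H)$. Each $M_H$ is canonically identified with $Sh^2_0(H)$: a basic $0$-sharbly of $H$ is nonzero exactly when its two arguments are independent, i.e.\ distinct points of $\mathbb P(H)$, and it is scaling invariant. Under this identification $\psi|_{M_H}$ is precisely the defining surjection $Sh^2_0(H)\to St_2(H)$, $[a,b]\mapsto\{a,b\}$. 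In particular $\psi$ is surjective, so it induces a surjection $\phi\colon X^g_1/R\to X_1$, and it remains only to check that $\ker\psi=R$.

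For that, $\ker(\psi|_{M_H})$ is the image of the sharbly boundary $Sh^2_1(H)\to Sh^2_0(H)$, which is spanned by the boundaries $\partial[a,b,c]$ as $(a,b,c)$ runs over the triples of pairwise distinct points of $\mathbb P(H)$ (a basic $1$-sharbly is nonzero only for such triples, and three points of $\mathbb P(H)\cong\mathbb P^1$ are in general position precisely when pairwise distinct). Up to sign each such boundary equals $[b,c]-[a,c]+[a,b]$, which corresponds under $M_H\cong Sh^2_0(H)$ to $(a,b)+(b,c)+(c,a)$; these are exactly the generators of $R$ lying in $M_H$. Since $R=\bigoplus_H(R\cap M_H)$, we get $\ker\psi=\bigoplus_H\ker(\psi|_{M_H})=R$, so $\phi$ is an isomorphism. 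It is $\GL(3,K)$-equivariant because $g$ carries the plane through $a,b$ to the plane through $ag,bg$, and $R$ is spanned by a $\GL(3,K)$-stable set of generators, hence is a submodule.

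For part (b), the boundary map of the generic complex sends $(a,b)\mapsto(b)-(a)$ and annihilates each generator of $R$, since $[(b)-(a)]+[(c)-(b)]+[(a)-(c)]=0$ in $X^g_0=X_0$. It therefore descends to a map $X^g_1/R\to X_0$, and transporting this along $\phi$ yields the map $\{a,b\}\mapsto(b)-(a)$ on $X_1$ --- which is the formula defining $\partial_1$; in particular this also records that $\partial_1$ is well defined. Equivariance is inherited from the generic boundary.

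I do not anticipate a genuine obstacle here: the only step requiring care is the bookkeeping in part (a) --- organizing $X^g_1$ by the plane spanned by a pair of points and recognizing $\psi$ on each summand as the map that defines $St_2(H)$ --- after which the computation of $\ker\psi$ is forced by the definition of the Steinberg module, and part (b) is immediate.
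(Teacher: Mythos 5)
Your argument is correct and follows essentially the same route as the paper: decompose $X^g_1$ plane by plane, identify each summand with $Sh_0(H)$ so that $\psi$ becomes the defining surjection onto $St_2(H)$, and observe that the image of the sharbly boundary is spanned by the triangle relations for pairwise distinct lines (which in a plane is the same as general position), giving $\ker\psi=R$; part (b) is then immediate exactly as in the paper. The only nitpick is your aside that a basic $1$-sharbly with a repeated line is zero --- a priori it could be $2$-torsion --- but this is harmless since its boundary vanishes anyway, so the image of $\beta$ is still spanned by the nondegenerate boundaries as you (and the paper) use.
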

\begin{proof}  
(a) The map $\psi$ is clearly $\GL(3,K)$-equivariant and surjective.  If $t\in X^g_1$, we can write $t=\sum_{H\in \mathbb P^\ast} t_H$, where $t_H$ is supported on symbols $(u,v)$ with $u,v\in H$.  Then $\psi(t)=0$ if and only if $\psi_H(t)=0$ for all $H$. 
For each $H$, $St_2(H)$ is the cokernel of the boundary map $\beta:Sh_1(H)\to Sh_0(H)$.  The image of $\beta$ is generated by  the basic relations $\{u,v\}+\{v,w\}+\{w,u\}$ where $(u,v,w)$ runs over triples in $H$ such that $u,v,w$ generate pairwise distinct lines.
Therefore the kernel of $\psi$ is exactly $R$. 

(b)
The boundary map $\partial_1:X_1\to X_0$ in $X^g$ sends $(a,b)$ to $(b)-(a)$.  This contains $R$ in its kernel, and induces a map $X_1/R\to X_0$ which clearly becomes $\partial_1$ after the identification via $\phi$.  It is obviously $\GL(3,K)$-equivariant.
\end{proof}

\begin{lemma} The sequence of $\Z[\GL(3,K)]$-modules
$$
\dots \to X_i \to X_{i-1} \to \dots \to X_2 \to X_1 \to X_0 \to \Z \to 0
$$
is exact.
\end{lemma}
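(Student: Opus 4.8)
The plan is to prove exactness one degree at a time, leaning on the two exactness statements already available: that $\dots\to Sh_1\to Sh_0\to St_3\to 0$ is exact, and that $X^g$ is an acyclic resolution of $\Z$. The bridge between them is the maps already at hand: the inclusions $\iota$, which restrict in degree $2$ to an isomorphism $X^g_2\cong Sh_0=X_2$, and the surjection $\psi\colon X^g_1\to X_1$ with kernel $R$ from the previous lemma. Three compatibilities will be used throughout: $\partial_2\circ\iota=\psi\circ\partial^{X^g}_2$ (this is exactly the computation $[a,b,c]\mapsto\{a,b\}+\{b,c\}+\{c,a\}=\{b,c\}-\{a,c\}+\{a,b\}$), $\partial_1\circ\psi=\partial^{X^g}_1$ (the previous lemma), and the fact that $\iota$ is compatible with boundary maps in degrees $\ge 3$.

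For $i\ge 3$ the complex $X$ in degrees $i-1,i,i+1$ is $Sh_{i-3},Sh_{i-2},Sh_{i-1}$ with the sharbly boundaries, so exactness at $X_i$ is just exactness of the sharbly complex at $Sh_{i-2}$ (and $i-2\ge 1$). At $X_0$ and at $\Z$: $\varepsilon$ and $\psi$ are onto, and $\operatorname{im}\partial_1=\operatorname{im}(\partial_1\circ\psi)=\operatorname{im}\partial^{X^g}_1=\ker\varepsilon$ by exactness of $X^g$ at $X^g_0$. At $X_1$: since $\psi$ is onto and $\iota$ is onto $X_2$, writing $x=\psi(t)$ and using $\partial_1\psi=\partial^{X^g}_1$, $\partial_2\iota=\psi\partial^{X^g}_2$, we get $\ker\partial_1=\psi(\ker\partial^{X^g}_1)$ and $\operatorname{im}\partial_2=\psi(\operatorname{im}\partial^{X^g}_2)$, and these coincide because $\ker\partial^{X^g}_1=\operatorname{im}\partial^{X^g}_2$ by exactness of $X^g$ at $X^g_1$.

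The step that takes real work — the main obstacle — is exactness at $X_2=Sh_0$, i.e.\ $\ker(\partial_2\colon Sh_0\to X_1)=\operatorname{im}(\partial^{Sh}\colon Sh_1\to Sh_0)$. For $\operatorname{im}\partial^{Sh}\subseteq\ker\partial_2$: given $[v_1,v_2,v_3,v_4]\in Sh_1$, write $\partial^{Sh}[v_1,\dots,v_4]=\iota(w)$ with $w\in X^g_2$, so $\partial_2\partial^{Sh}[v_1,\dots,v_4]=\psi(\partial^{X^g}_2 w)$, and it suffices to see $\partial^{X^g}_2 w\in R$. When $v_1,\dots,v_4$ are in general position, $w=\partial^{X^g}_3(v_1,\dots,v_4)$ and $\partial^{X^g}_2 w=0$; a short case check shows that otherwise either the symbol or a pair of arguments degenerates (making $\partial^{Sh}$ trivial or reducing the count), or exactly one face $[v_i,v_j,v_k]$ is supported on a plane, in which case one computes $\partial^{X^g}_2 w=(v_i,v_j)+(v_j,v_k)+(v_k,v_i)$, a defining generator of $R$. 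For the reverse inclusion, let $z=\iota(w)$ with $\partial_2(z)=0$; then $\partial^{X^g}_2 w\in\ker\psi=R$, say $\partial^{X^g}_2 w=\sum_k c_k r_k$ with $r_k$ generators of $R$. Given a generator $r=(a,b)+(b,c)+(c,a)$ — so $a,b,c$ are distinct points on a common line $H\in\mathbb{P}^\ast$ — pick a point $v\notin H$ and set $w_r=(v,a,b)+(v,b,c)+(v,c,a)\in X^g_2$; then $\partial^{X^g}_2 w_r=r$ (the $v$–terms telescope), while $\iota(w_r)=[v,b,c]+[v,c,a]+[v,a,b]=\partial^{Sh}[v,a,b,c]$ lies in $\operatorname{im}\partial^{Sh}$ because $[a,b,c]=0$ ($a,b,c$ being collinear). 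Now $w-\sum_k c_k w_{r_k}$ is killed by $\partial^{X^g}_2$, hence lies in $\ker\partial^{X^g}_2=\operatorname{im}\partial^{X^g}_3$ by exactness of $X^g$; since $\iota$ carries $\operatorname{im}\partial^{X^g}_3$ into $\operatorname{im}\partial^{Sh}$, adding back $\sum_k c_k\iota(w_{r_k})$ gives $z\in\operatorname{im}\partial^{Sh}=\operatorname{im}(\partial_3\colon X_3\to X_2)$, which is what we want.

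This completes exactness everywhere. I expect the bookkeeping in the last paragraph — in particular the choice of the lifts $w_r$ and the verification that $\partial^{X^g}_2 w$ always lands in $R$ — to be the delicate part; everything else is a formal consequence of the exactness of $X^g$ and of the sharbly complex together with the module maps $\iota$ and $\psi$. (One can note that exactness at $X_2$ is equivalent to injectivity of the induced map $St_3\to\bigoplus_{H\in\mathbb{P}^\ast}St_2(H)$, $\{a,b,c\}\mapsto\{a,b\}+\{b,c\}+\{c,a\}$, which is the Solomon--Tits identification of $St_3$ with the $1$–cycles of the line–plane incidence graph of $K^3$; one could invoke that instead, but the argument above has the advantage of staying inside the sharbly/$X^g$ framework already set up.)
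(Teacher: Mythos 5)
Your proof is correct and takes essentially the same route as the paper's: the only node needing real work is $X_2$, and your lift $w_r=(v,a,b)+(v,b,c)+(v,c,a)$ with $\partial^g_2w_r=r$ and $\iota(w_r)=\partial^{Sh}[v,a,b,c]$ (using $[a,b,c]=0$ for coplanar vectors, then correcting by exactness of $X^g$) is exactly the paper's step $(*)$ with $\widetilde r=[a,b,c,v]$. The only cosmetic difference is that you verify $\partial_2\circ\partial_3=0$ by an explicit case check where the paper simply calls it clear.
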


\begin{proof}
Since the sharbly sequence is exact and the augmentation map is surjective, the only nodes which need checking are those at $X_i$, $i=0,1,2$.  Denote the boundary map from $X_i\to X_{i-1}$ by $\partial_i$.  Denote the boundary maps in $X^g$ by $\partial^g$.  We identify $X^g_{k+2}$ with its image in $Sh_k$ via $\iota$, under which identification (1) $\partial^g_{k+2}=\partial_k|X^g_{k+2}$ and (2) $X^g_2$ and $Sh_0$ are isomorphic.

Node at $X_0$: Clearly $\varepsilon\circ\partial_1=0$.  Let $x\in X_0$.  If $\varepsilon(x)=0$, there exists $y\in X^g_1$ with $\partial^g_1(y)=x$ because $X^g$ is exact.  Then $\partial_1(\psi y)=x$.

Node at $X_1$: Clearly $\partial_1\circ\partial_2=0$.  Let $y\in X_1$ such that  $\partial_1(y)=0$.  Choose $y'\in X^g_1$ such that $y=\psi y'$.  Then 
$\partial^g_1(y')=0$.
Hence there exists $z\in X^g_2=Sh_0 = X_2$ with 
$\partial^g_2(z)= y'$ because $X^g$ is exact.  Then $\partial_2(z)=y$.

Node at $X_2$: Clearly $\partial_2\circ\partial_3=0$.
Now suppose $x\in X_2$ such that $\partial_2(x)=0$.  Then 
$\partial^g_2(x)\in R$.
We will show (*) for every $r\in R$ there exists $\widetilde r\in X_3=Sh_1$ such that $r=\partial^g_2\partial_3(\widetilde r)\in X^g_1$. Then 
$\partial^g_2(x - \partial_3(\widetilde{\partial^g_2(x)})=0$.  
Therefore there exists $t\in X^g_3\subset X_3$
such that $\partial_3(t)=\partial^g_3(t) = x - \partial_3(\widetilde{\partial^g_2(x)})$.  Then $\partial_3(t+\widetilde{\partial^g_2(x)})=x$.

Proof of (*): Let 
$H \in \mathbb P^\ast, \ a,b,c\in H, (a,b,c)\in X^g_2(H)$.  (The last condition just means that $a,b,c$ are pairwise distinct.)  
It suffices to let
$r=(a,b)+(b,c)+(c,a)$ and find $\widetilde r$.   Pick $v\not\in H$.  Set $\widetilde r=[a,b,c,v]$.  Then $\partial_3(\widetilde r)=[b,c,v]-[a,c,v]+[a,b,v]$ since $[a,b,c]=0$ in $Sh_0$.  Hence $\partial^g_2(\partial_3(\widetilde r))=r$.
\end{proof}

A straightforward attempt to generalize this lemma for $n>3$ doesn't work.    The reason the lemma works for $n=3$ is that any tuple of pairwise distinct lines in a plane is generic.

We now specialize to the case in which $K=\Q$, and we let $\Gamma$ be a subgroup of finite index in $\SL(3,\Z)$.  If $M$ is any $S$-module, $X_1\otimes M = \bigoplus_{H\in \mathbb P^\ast} St_2(H)\otimes M$. If $H$ is a fixed plane, let $P_H$ be its stabilizer in $\GL(3)$ and $U_H$ the unipotent radical of $P_H$.  Since $\SL(3,\Z)$ acts transitively on $\mathbb P^\ast$, 
$\Gamma$ has a finite number of orbits in $\mathbb P^\ast$, represented by the planes in a finite set, say $\bH(\Gamma)$.
It follows from \cite[Corollary III.5.4]{B} that $X_1 \otimes M$ is isomorphic as $S$-module to a finite sum of induced modules 
$\bigoplus_{H\in\bH(\Gamma)}\Ind_{P_H\cap\Gamma}^{\Gamma} St_2(H)\otimes M$, where $U_H\cap\Gamma$ acts trivially on $St_2(H)$.

\section{The spectral sequence for $\GL_3$}\label{spec}

Now set $K=\Q$.
Let $F$ be a resolution of $\Z$ by $\Z[S]$-modules which are free as $\Z[\Gamma]$-modules.  
Let $W=X\otimes M$ with the diagonal $S$-action and
form the double complex $W\otimes_\Gamma F$.  To compute the homology, we use the spectral sequence from \cite[VII.5.(5.3)]{B}.  
All the differentials on the $E^i$ page, $i\ge 1$ are Hecke-equivariant because they are all induced by the differential on the double complex, which is an $S$-module.  From now on, we assume that $6$ is invertible on $M$.

Since $X$ is a resolution of $\Z$ by free $\Z$-modules, we see that $W$ is a resolution of $M$.  Therefore there is a weak equivalence from $W$ to the chain complex consisting of $M$ concentrated in dimension 0, so that by \cite[VII.5.2]{B}, $H_*(\Gamma,W)\approx H_*(\Gamma,M)$.

Hence, in the spectral sequence, we have
$$
E_{jq}^1=H_q(\Gamma, X_j\otimes M)\implies H_{j+q}(\Gamma,M).
$$
For each $j\ne 1$, the $j$-chains $X_j$ are isomorphic to a direct sum of induced representations 
$$
X_j\approx \bigoplus_{\sigma\in C_j} \Ind_{\Gamma_\sigma}^\Gamma M_\sigma
$$
where $C_0$ is the set of vertices in $Y^g_0$; if $j\ge 2$, $C_j$ is the set of basic sharblies $[v_1,\dots,v_{j+1}]$ with $v_1,\dots,v_{j+1}$ vectors in $\Q^3$ that span $\Q^3$; $\Gamma_\sigma$ is the stabilizer of $\sigma$, $\varepsilon_\sigma$ is the orientation character recording how $\Gamma_\sigma$ acts on $\sigma$; and $M_\sigma=M\otimes\varepsilon_\sigma$. 

As in \cite[VII.7]{B} we have from Shapiro's lemma:
$$
E_{jq}^1=\bigoplus_{\sigma\in C_j}H_q(\Gamma_\sigma,M_\sigma)
$$
if $j\ne 1$.
We see easily that $H_q(\Gamma_\sigma,M_\sigma)=0$ in the following cases:
\begin{enumerate}
\item For $q>3$ and $\sigma\in C_0$, since the virtual cohomological dimension of the stabilizer $P$ of a $0$-cycle is $3$ and any torsion element of $P$ has order invertible on $M$.
\item For $q\geq 1$ and $\sigma\in C_j$ with $j>1$, since the stabilizer of a basic sharbly is a finite subgroup of $\GL(3,\Z)$, hence of order invertible in $M$ (see \cite[Corollary III.10.2]{B}).
\end{enumerate}

The column with $j=1$ will be treated later.

We then have the $E^1$ page of our spectral sequence as follows, where 
$H_i(\sigma):=H_i(\Gamma_\sigma,M_\sigma)$.

\bigskip
\noindent
\begin{tabular}{ccccc}
$\displaystyle{\bigoplus_{\sigma\in X_0/\Gamma}H_3(\sigma)}$&
$\displaystyle{H_3(\Gamma, X_1\otimes M)}$&0&0&0\cr\ \cr
$\displaystyle{\bigoplus_{\sigma\in X_0/\Gamma}H_2(\sigma)}$&
$\displaystyle{H_2(\Gamma, X_1\otimes M)}$&0&0&0\cr\ \cr
$\displaystyle{\bigoplus_{\sigma\in X_0/\Gamma}H_1(\sigma)}$&
$\displaystyle{H_1(\Gamma, X_1\otimes M)}$&0&0&0\cr\ \cr
$\displaystyle{\bigoplus_{\sigma\in X_0/\Gamma}H_0(\sigma)}$&
$\displaystyle{H_0(\Gamma, X_1\otimes M)}$&
$\displaystyle{H_0(\Gamma,Sh_0\otimes M)}$&$\displaystyle{H_0(\Gamma,Sh_1\otimes M)}$&
$\displaystyle{H_0(\Gamma,Sh_2\otimes M)}$\cr
\end{tabular}

It will follow from the following lemma that any Hecke eigenclass in $E_{03}^1$ that has an attached Galois representation which is the sum of an irreducible two-dimensional representation and a character cannot be hit by any differential in the spectral sequence and hence survives to $E^\infty$.  (This uses the fact that the Hecke eigenvalues determine the characteristic polynomials of Frobenius and hence the attached Galois representation up to semi-simplification.)

\begin{theorem}\label{d}
 Assume that $M$ is an admissible $\F_p[S]$-module with $p>3$.  Then the $E_{40}^2$ and $E_{13}^1$ terms of the spectral sequence are finite dimensional $\F_p$-vector spaces and all systems of Hecke eigenvalues appearing in them are attached to Galois representations that are sums of characters.   
\end{theorem}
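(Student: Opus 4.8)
The plan is to identify each of the two terms, Hecke-equivariantly, with homology assembled from arithmetic subgroups of proper parabolic subgroups of $\GL_3$, and then to use that on such homology the operators $T(\ell,k)$ act through characters of a maximal torus, so that $\det(I-\rho(\Frob_\ell)X)$ factors into linear factors and the attached representation is a sum of characters. Two inputs recur. First, the Solomon--Tits resolution: for an $m$-dimensional $\Q$-vector space $W$ it gives a $\GL(m,\Q)$-equivariant — hence $S_0(3,N)$-equivariant, since every element of $S_0(3,N)$ has nonzero determinant — exact complex resolving $St_m(W)$ by the permutation modules on the flags of $W$, on whose summands the stabilizing parabolic subgroups act. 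Second, Borel--Serre duality as recalled above (Brown~\cite{B}): $H_i(\Gamma',St_m\otimes M')\cong H^{m(m-1)/2-i}(\Gamma',M')$ Hecke-equivariantly, for any finite-index $\Gamma'\subset\GL(m,\Z)$ with $(m+1)!$ invertible on $M'$. Since $p>3$, all torsion orders of finite subgroups of $\GL(3,\Z)$ are invertible on $M$, so every finite stabilizer is $M$-acyclic and $\Gamma$ — as well as every arithmetic subgroup of a parabolic of $\GL_3$ — has (co)homological dimension at most $3$ with $M$-coefficients.

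For $E_{13}^1$: by Theorem~\ref{hecke-equ} applied to $X_1\otimes M\cong\bigoplus_{H\in\bH(\Gamma)}\Ind_{P_H\cap\Gamma}^\Gamma St_2(H)\otimes M$ (with $\bH(\Gamma)$ finite) we get a Hecke-equivariant isomorphism $E_{13}^1\cong\bigoplus_{H\in\bH(\Gamma)}H_3(P_H\cap\Gamma,St_2(H)\otimes M)$, already finite-dimensional. Assembling the Solomon--Tits resolution of $St_2$ on each plane over the set $\PP^\ast$ of planes in $\Q^3$ yields a Hecke-equivariant short exact sequence $0\to X_1\otimes M\to\Z[\mathcal F]\otimes M\to\Z[\PP^\ast]\otimes M\to0$, where $\mathcal F$ is the set of incident line--plane pairs in $\Q^3$. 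Since $H_4(\Gamma,\Z[\PP^\ast]\otimes M)=\bigoplus_{\PP^\ast/\Gamma}H_4(P_H\cap\Gamma,M)=0$, the long exact sequence produces a Hecke-equivariant injection $E_{13}^1\hookrightarrow H_3(\Gamma,\Z[\mathcal F]\otimes M)\cong\bigoplus_{\mathcal F/\Gamma}H_3(B_\sigma\cap\Gamma,M)$, the $B_\sigma$ being the rational Borel subgroups stabilizing the flags. Each $B_\sigma\cap\Gamma$ is polycyclic-by-finite; its homology is governed by the action of the finite torus part on the homology of the unipotent radical, and the diagonal matrices $D_{\ell,k}$ act on it through the three torus characters — precisely what Theorems~\ref{psid}, \ref{calcx} and \ref{U-inv} compute in the cases at hand. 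Hence every Hecke eigensystem in $E_{13}^1$ is attached to a sum of three characters.

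For $E_{40}^2$: each $Sh_k$ is a sum of modules induced from stabilizers of basic sharblies — finite subgroups of $\GL(3,\Z)$ of order prime to $p$ — so $Sh_k\otimes M$ is $\Gamma$-acyclic; as $Sh_\bullet\otimes M$ resolves $St_3\otimes M$, the bottom row of the spectral sequence computes $H_\bullet(\Gamma,St_3\otimes M)$ in sharbly degree, whence $E_{40}^2=H_2(\Gamma,St_3\otimes M)\cong H^1(\Gamma,M)$, finite-dimensional. To control its eigensystems, run the Solomon--Tits resolution of $St_3$ over $\Q$: a Hecke-equivariant four-term exact sequence $0\to St_3\otimes M\to\Z[\mathcal F]\otimes M\to(\Z[\PP^2]\oplus\Z[\PP^\ast])\otimes M\to M\to0$, which we break into $0\to St_3\otimes M\to\Z[\mathcal F]\otimes M\to Z\to0$ and $0\to Z\to\Z[\mathcal V]\otimes M\to M\to0$, with $\mathcal V=\PP^2\sqcup\PP^\ast$ the vertex set of the Tits building. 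From the first long exact sequence $H_2(\Gamma,St_3\otimes M)$ is an extension of a submodule of $H_2(\Gamma,\Z[\mathcal F]\otimes M)=\bigoplus H_2(B_\sigma\cap\Gamma,M)$ — a sum of characters by the Borel analysis just given — by a quotient of $H_3(\Gamma,Z)$; from the second and $H_4(\Gamma,M)=0$ we get $H_3(\Gamma,Z)=\ker\!\big(H_3(\Gamma,\Z[\mathcal V]\otimes M)\to H_3(\Gamma,M)\big)$, the map being the assembly of the inclusion-induced maps $H_3(P_v\cap\Gamma,M)\to H_3(\Gamma,M)$. A Hochschild--Serre argument for $1\to U_{P_v}\cap\Gamma\to P_v\cap\Gamma\to L_v\cap\Gamma\to1$ (with $U_{P_v}\cap\Gamma\cong\Z^2$ acting through a $p$-group and $L_v\cap\Gamma$ of cohomological dimension $\le1$) collapses, in total degree $3$, to $H_3(P_v\cap\Gamma,M)\cong H_1(L_v\cap\Gamma,H_2(\Z^2,M))$. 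One then shows the displayed map is injective on the non-Eisenstein part of $H_3(\Gamma,\Z[\mathcal V]\otimes M)$, so that $H_3(\Gamma,Z)$, and with it $H_2(\Gamma,St_3\otimes M)=H^1(\Gamma,M)$, keeps only eigensystems attached to sums of characters.

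The routine half is $E_{13}^1$ and the finite-dimensionality; the bookkeeping core is to verify that every connecting homomorphism above is Hecke-equivariant for the full level-$pN$ Hecke algebra — which is exactly what the formalism of Theorem~\ref{hecke-equ} is designed to deliver — and not merely for the Hecke algebras of the parabolics. The genuine obstacle is the last assertion for $E_{40}^2$: $H_1(L_v\cap\Gamma,H_2(\Z^2,M))$ is essentially an $H^1$ of an arithmetic subgroup of $\GL_2$, so it really does contain $\GL_2$-cuspidal eigensystems, attached to a two-dimensional representation summed with a character, and one must rule out their reaching $H^1(\Gamma,M)$. In characteristic $0$ this is the statement that the $\GL_2$-cuspidal Eisenstein cohomology of $\GL_3$ occurs only in degrees $2$ and $3$; in characteristic $p$, where no such automorphic classification is available, the cleanest route is to feed $H_1(L_v\cap\Gamma,-)$ back through the $\GL_2$ Solomon--Tits sequence and Borel--Serre duality, reducing once more to homology of Borel subgroups, and to verify the needed injectivity directly from the explicit coset representatives of Theorem~\ref{cosets} (alternatively, one may hope to quote a prior result that the bottom cohomology $H^1$ of congruence subgroups of $\SL(3,\Z)$ is Eisenstein). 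That step is where the real work lies.
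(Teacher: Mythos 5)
Your identifications of the two terms agree with the paper: $E_{40}^2\cong H_2(\Gamma,St_3\otimes M)\cong H^1(\Gamma,M)$ by Borel--Serre duality, and $E_{13}^1\cong\bigoplus_{H\in\bH(\Gamma)}H_3(P_H\cap\Gamma,St_2(H)\otimes M)$ via Theorem~\ref{hecke-equ} and Shapiro's lemma, with finite-dimensionality following. But the decisive assertion of Theorem~\ref{d} --- that every Hecke eigensystem in these spaces is attached to a sum of characters --- is not actually established in your argument. For $E_{40}^2$ you reduce, through the Solomon--Tits sequence for $St_3$, to an injectivity claim for the assembly map $\bigoplus_v H_3(P_v\cap\Gamma,M)\to H_3(\Gamma,M)$ ``on the non-Eisenstein part,'' and you concede that this is where the real work lies; that claim is essentially equivalent to the statement being proved, so nothing has been gained. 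The ``prior result'' you hope to quote does exist and is exactly what the paper invokes: \cite[Theorem 4.1.5]{Ash-Duke} says that $H^1(\Gamma,M)$, for $\Gamma$ a congruence subgroup of $\GL(3,\Z)$ and $M$ admissible, is a sum of generalized Hecke eigenspaces all of whose eigensystems have attached Galois representation a sum of three characters. With that citation the $E_{40}^2$ case is one line; without it (or a genuine substitute, which you do not supply) your proof is incomplete at its central point.

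The same issue recurs, in milder form, for $E_{13}^1$. Your route maps it into $\bigoplus_\sigma H_3(B_\sigma\cap\Gamma,M)$ and then asserts that eigensystems in Borel homology are sums of three characters because ``the diagonal matrices act through torus characters''; but the Hecke action there is by the full matrix of operators $T_{ab}$ of Theorem~\ref{hecke-equ}, and extracting the factorization of the Hecke polynomial from it is precisely the content of the Ash--Duke analysis, not a formality settled by Theorems~\ref{psid}, \ref{calcx}, \ref{U-inv} (which compute specific $\psi_d^i$ values for the cosets of Theorem~\ref{cosets}, not an Eisenstein-ness statement). The paper's argument is different and complete: it runs Hochschild--Serre for $1\to U_H\cap\Gamma\to P_H\cap\Gamma\to\Gamma_H\to1$, observes that since $U_H\cap\Gamma$ has homological dimension $2$ only the term $j=1$, $q=2$ contributes in total degree $3$, applies $\GL_2$ Borel--Serre duality to land in $H^0(\Gamma_H,C(H,2))$, and then quotes \cite[Theorem 4.1.4]{Ash-Duke} for the attachment of sums of characters, finally using Theorem~\ref{hecke-equ} to transport the conclusion to $E_{13}^1$. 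To repair your proposal you should either import those two theorems of \cite{Ash-Duke} at the corresponding points, or actually carry out the injectivity/triangularization arguments you only sketch; as written, the conclusion of Theorem~\ref{d} does not follow.
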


\begin{proof}
Since the sharbly complex is a resolution of $St:=St_3(\Q^3)$, and $6$ acts invertibly on $M$, $E^2_{40}\approx H_2(\Gamma,St\otimes M)$, cf. Corollary 8 in~\cite{AGMV}.  
Borel-Serre duality then gives an isomorphism of Hecke-modules
$E^2_{40}\cong H^1(\Gamma,M)$, which is a finite dimensional $\F_p$-vector space.  
By \cite[Theorem 4.1.5]{Ash-Duke}, $H^1(\Gamma,M)$ is a sum of generalized Hecke eigenspaces, and any eigenclass appearing in $H^1(\Gamma,M)$ has an attached Galois representation that is a sum of three characters.  

We now consider $E_{13}^1$.
Recall that there is a finite set of planes $\bH(\Gamma)$ such that
$$
X_1\otimes M \approx 
\bigoplus_{H\in\bH(\Gamma)}\Ind_{P_H\cap\Gamma}^{\Gamma} St_2(H)\otimes M
$$
 where $U_H\cap\Gamma$ acts trivially on $St_2(H)$.

We define the $S$-sheaf of $\PP^*$ by $H\mapsto St_2(H)\otimes M$.  We then use Theorem~\ref{hecke-equ}.

By Shapiro's lemma,  
$E_{13}^1\approx \bigoplus_{H\in\bH(\Gamma)} H_3(P_H\cap \Gamma, St_2(H)\otimes M)$.  

Let $\Gamma_H$ denote $(P_H\cap \Gamma)/(U_H\cap \Gamma)$.  It is isomorphic to a congruence subgroup of $\GL(2,\Z)$.  We have the Hochschild-Serre spectral sequence
$$
E^2_{jq}=H_j(\Gamma_H,H_q(U_H\cap\Gamma,St_2(H)\otimes M))\implies 
H_{j+q}(P_H\cap \Gamma, St_2(H)\otimes M).
$$
Because $U_H\cap\Gamma$ acts trivially on $St_2(H)$, we have $H_q(U_H\cap\Gamma,St_2(H)\otimes M) \approx St_2(H)\otimes C(H,q)$ where $C(H,q)=H_q(U_H\cap\Gamma, M)$ is an admissible $\Gamma_H$-module on which $6$ acts invertibly.   Also, the  homological dimension of $U_H\cap \Gamma$ is $2$. Therefore the only nonzero term in the $E^2$ page of the Hochschild-Serre spectral sequence when $j+q=3$ occurs when $j=1,q=2$.  So any packet of Hecke eigenvalues occurring in
$$H_{3}(P_H\cap \Gamma, St_2(H)\otimes M)$$
also occurs in
$$H_1(\Gamma_H, St_2(H)\otimes C(H,2)).
$$
By Borel-Serre duality, this is isomorphic to
$H^0(\Gamma_H,C(H,2))$ and by \cite[Theorem 4.1.4]{Ash-Duke}, this is a sum of generalized Hecke eigenspaces, and any system of Hecke eigenvalues occurring here has as attached Galois representation a sum of two characters.  At this point we also see that $E_{13}^1$ is finite dimensional over $\F_p$.

Now we use Theorem~\ref{hecke-equ} to compute the Hecke operators on $E^1_{13}$, following the same outline as that used below for $E_{03}^1$.   This proves that $E^1_{13}$
is a sum of generalized Hecke eigenspaces, and any system of Hecke eigenvalues occurring here has as attached Galois representation a sum of three characters.
\end{proof}

\section{Reducible Galois representations}\label{redgal}

We continue to assume that 6 is invertible on $M$.  We note that for $\sigma\in C_0$, the orientation character is trivial, so that $M_\sigma=M$. For any $d|N, d>0$, we have taken $(1:d:0)$ (with stabilizer $P_d$) as a representative of its orbit in $C_0$.
Therefore, $E_{03}^1$ contains $H_3(P_d\cap\Gamma_0(3,N),M)$ as a direct summand.
We will find the system of Hecke eigenvalues in which we are interested in this summand.

Let $\rho:G_\Q\to\GL_3(\bar\F_p)$ be a Galois representation that can be written as a direct sum $\rho=\sigma\oplus\psi$, where $\sigma:G_\Q\to\GL_3(\bar\F_p)$ is an odd, irreducible, two-dimensional representation, and $\psi:G_\Q\to\GL_1(\bar\F_p)$ is a character.  Let $N_1$ be the Serre conductor of $\sigma$, and let $d$ be the conductor of $\psi$, and assume that $N_1d$ is squarefree. We will show that $\rho=\sigma\oplus\psi$ is attached to a Hecke eigenclass in $E_{3,0}^2$ with $\Gamma=\Gamma_0(3,N)$, and coefficient module as predicted by Conjecture~\ref{conjecture}.

We set $\tau=\sigma\otimes\omega^{-1}$, so that $\rho=(\tau\otimes\omega)\oplus \psi$.  Assume that the predicted weight of $\tau$ in Serre's conjecture is $F(a,b)$, with $0\leq a-b\leq p-1$ and $0\leq b<p-1$.  Then if $\tau$ is ordinary, we have
$$\tau|_{I_p}\sim\begin{pmatrix}\omega^{a+1}&*\cr0&\omega^b\end{pmatrix}$$
and if $\tau$ is supersingular, we have
$$\tau|_{I_p}\sim\begin{pmatrix}\omega_2^{(a+1)+bp}&0\cr0&\omega_2'^{(a+1)+bp}\end{pmatrix}.$$
Note that $\tau$ has Serre conductor $N_1$ (the same as $\sigma$), and we may factor $\det(\tau)=\omega^{(a+b+1)}\chi_1$, where the conductor of $\chi_1$ divides $N_1$.  We may also factor $\psi=\omega^c\chi_0$, where $\chi_0$ has conductor $d$ and $0\leq c<p-1$.  We will denote by $\lambda_\ell$ the trace of $\tau(\frob_\ell)$.  We note then that the trace of $\rho(\frob_\ell)$ is equal to $\ell\lambda_\ell+\chi_0(\ell)\ell^c$ and its cotrace (the coefficient of $X^2$ in $\det(I-\rho(\frob_\ell))$) is equal to $\ell(\chi_1(\ell)\ell^{a+b+2}+\chi_0(\ell)\ell^c\lambda_\ell)$.  Examining Conjecture~\ref{conjecture}, we see that the predicted weight of $\rho$ is $F(a,b,c)$ (where, if necessary we add $p-1$ to both $a$ and $b$ so that $(a,b,c)$ will be $p$-restricted, and note that by our conventions this change does not change the module $F(a,b)$).  In addition, the nebentype of $\rho$ is $\chi_0\chi_1$, and the level of $\rho$ is $N=N_1d$.

We know by Serre's conjecture, which is now a theorem, that $\tau$ is attached to a Hecke eigenclass in $H_1(\Gamma_0(2,N_1),F(a,b)_{\chi_1})$.  Since $\tau$ is absolutely irreducible, it is attached to a cusp form, and by Eichler-Shimura it is also attached to a Hecke eigenclass $f$ in $$H_1(\Gamma_0(2,N_1)^\pm,F(a,b)_{\chi_1}).$$

Denote by $M$ the module $F(a,b,c)_\chi$. We now consider the Hochschild-Serre spectral sequence for the homology of the exact sequence
$$1\to U_d\cap \Gamma_0(3,N)\to P_d\cap \Gamma_0(3,N)\stackrel{{\psi_d^2}}\to\Gamma_0(2,N/d)^{\pm}\to 1$$
with coefficients in $M$.

This spectral sequence degenerates at $E^2$ because it is only two columns thick, and we have $$E^2_{jq}=H_j(\Gamma_0(2,N/d)^\pm,H_q(U_d\cap\Gamma_0(3,N),M)).$$
The desired Hecke eigenclass stems from $H_1(\Gamma_0(2,N/d)^\pm,H_2(U_d\cap\Gamma_0(3,N),M))$.

Note that since $U_d\cap\Gamma_0(3,N)$ is an abelian group of rank 2, $$H_2(U_d\cap\Gamma_0(3,N),M)=H^0(U_d\cap\Gamma_0(3,N),M)=M^{U_d\cap\Gamma_0(3,N)}.$$

By Theorem~\ref{U-inv}, $M^{U_d\cap\Gamma_0(n,N)}\isom M(c;a,b)_\chi^d$.  Recall that $M(c;a,b)$ is just $F(a,b)$ as a $\GL_2$-module, with an additional action of $\GL_1$ through the $c$-power map.
Hence, we are interested in finding a certain class in $H_1(\Gamma_0(2,N/d)^\pm,F(a,b)_{\chi_1})$ where $P_d$ acts on $F(a,b)_{\chi_1}$, (and hence on the cohomology) as described in Theorem~\ref{U-inv}.

We have chosen $f\in H_1(\Gamma_0(2,N_1)^\pm,F(a,b)_{\chi_1})$ to correspond to $\tau$, so that for the two dimensional Hecke operator $T_\ell=T(\ell,1)$, we have $f|_{T_\ell}=\lambda_\ell f$, where $\lambda_\ell=\tr(\tau(\frob_\ell))$.  We now examine how the three-dimensional Hecke operators $T(\ell,1)$ and $T(\ell,2)$ act on $f$.

For each prime $\ell\nmid pN$ and for each coset representative in $T(\ell,1)$, we apply Theorem~\ref{calcx} to translate the coset representatives $s$ by an element $\gamma\in\Gamma_0(3,N)$ into $P_d$ and then let 
$x=g_ds\gamma g_d^{-1}\in P_0$, and obtain:

If $s=\begin{pmatrix}1&0&0\cr0&1&0\cr \alpha_1&\alpha_2&\ell\end{pmatrix}$ then (by case 1 of Theorem~\ref{calcx}), 
$$\psi_d^1(s\gamma)=\psi_0^1(x)=1\text{ and }\psi_d^2(s\gamma)=\psi_0^2(x)=\begin{pmatrix}1&0\cr \alpha_2-\alpha_1d&\ell\end{pmatrix}.$$

If $s=\begin{pmatrix}1&0&0\cr \alpha_0&\ell&0\cr 0&0&1\end{pmatrix}$ with $\ell\nmid \alpha_0d+1$, then (by case 3 of Theorem~\ref{calcx})
$$\psi_0^1(x)=1\text{ and }\psi_0^2(x)=\begin{pmatrix}\ell&0\cr0&1\end{pmatrix}.$$

If $s=\begin{pmatrix}1&0&0\cr \alpha_0&\ell&0\cr 0&0&1\end{pmatrix}$ with $\ell|\alpha_0d+1$, then  (by case 4 of Theorem~\ref{calcx}) 
$$\psi_0^1(x)=\ell\text{ and }\psi_0^2(x)=\begin{pmatrix}1&0\cr0&1\end{pmatrix}.$$

If $s=\begin{pmatrix}\ell&0&0\cr 0&1&0\cr 0&0&1\end{pmatrix}$ then (by case 2 of Theorem~\ref{calcx})
$$\psi_0^1(x)=1\text{ and }\psi_0^2(x)=\begin{pmatrix}\ell&0\cr0&1\end{pmatrix}.$$

Define the $S$-sheaf on $C_0$ by $y\mapsto H_1(P_y\cap\Gamma_0(3,N)),M)$, where $P_y$ is the stabilizer of $y$.  Use Theorem~\ref{hecke-equ} and the fact that $N$ is assumed to be squarefree Then by Section~\ref{preparing}, we know that $s$ preserves the $\Gamma$-orbits of $C_0$ and $T(\ell,1)=\oplus T_{yy}$ where $y$ runs through a set of representatives of the $\Gamma$-orbits.  Since $f$ is supported on the orbit of $(1:d:0)$ and the $g_{yy}$ are just the $s\gamma$, we obtain from Theorems~\ref{hecke-equ} and \ref{U-inv}(3),
\begin{align*}
f|T(\ell,1)&=\sum_{\alpha_1,\alpha_2}f\bigg|_{\chi_1}\begin{pmatrix}1&0\cr \alpha_2-\alpha_1d&\ell\end{pmatrix}\cr&\quad+\sum_{\alpha_0:\ell\nmid \alpha_0d+1}f\bigg|_{\chi_1}\begin{pmatrix}\ell&0\cr0&1\end{pmatrix}+\chi_0(\ell)\ell^cf\bigg|_{\chi_1}I+f\bigg|_{\chi_1}\begin{pmatrix}\ell&0\cr0&1\end{pmatrix}
\cr
&=\ell\left(f\bigg|T_\ell-f\bigg|_{\chi_1}\begin{pmatrix}\ell&0\cr0&1\end{pmatrix}\right)+(\ell-1)f\bigg|_{\chi_1}\begin{pmatrix}\ell&0\cr0&1\end{pmatrix}+f\bigg|_{\chi_1}\begin{pmatrix}\ell&0\cr0&1\end{pmatrix}+\chi_0(\ell)\ell^cf\cr
&=(\ell \lambda_\ell +\chi_0(\ell)\ell^c)f.\end{align*}

Similarly, for $T(\ell,2)$:

If $s=\begin{pmatrix}1&0&0\cr \alpha_0&\ell&0\cr \alpha_1&0&\ell\end{pmatrix}$ with $\ell\nmid \alpha_0d+1$, then (by case 3 of Theorem~\ref{calcx}) $$\psi_0^1(x)=1\text{ and }\psi_0^2(x)=\begin{pmatrix}\ell&0\cr -\alpha_1\ell d&\ell\end{pmatrix}.$$

If $s=\begin{pmatrix}1&0&0\cr \alpha_0&\ell&0\cr \alpha_1&0&\ell\end{pmatrix}$ with $\ell|\alpha_0d+1$, then (by case 4 of Theorem~\ref{calcx}) $$\psi_0^1(x)=\ell\text{ and }\psi_0^2(x)=\begin{pmatrix}1&0\cr -\alpha_1 d&\ell\end{pmatrix}.$$

If $s=\begin{pmatrix}\ell&0&0\cr 0&1&0\cr 0&\alpha_2&\ell\end{pmatrix}$  then (by case 2 of Theorem~\ref{calcx}) $$\psi_0^1(x)=1\text{ and }\psi_0^2(x)=\begin{pmatrix}\ell&0\cr \alpha_2\ell&\ell\end{pmatrix}.$$

If $s=\begin{pmatrix}\ell&0&0\cr 0&\ell&0\cr 0&0&1\end{pmatrix}$  then (by case 1 of Theorem~\ref{calcx})  
$$\psi_0^1(x)=\ell\text{ and }\psi_0^2(x)=\begin{pmatrix}\ell&0\cr 0&1\end{pmatrix}.$$

Hence,
\begin{align*}
f|T(\ell,2)&=\sum_{\alpha_0,\alpha_1:\ell\nmid \alpha_0d+1}f\bigg|_{\chi_1}\begin{pmatrix}\ell&0\cr0&\ell\end{pmatrix}+\sum_{\alpha_1}\chi_0(\ell)\ell^cf\bigg|_{\chi_1}\begin{pmatrix}1&0\cr -\alpha_1 d&\ell\end{pmatrix}\cr&\qquad\qquad+\sum_{\alpha_2}f\bigg|_{\chi_1}\begin{pmatrix}\ell&0\cr0&\ell\end{pmatrix}+\chi_0(\ell)\ell^cf\bigg|_{\chi_1}\begin{pmatrix}\ell&0\cr 0&1\end{pmatrix}
\cr
&=\ell^2f\bigg|_{\chi_1}\begin{pmatrix}\ell&0\cr0&\ell\end{pmatrix}+\chi_0(\ell)\ell^cf\bigg|T_\ell\cr
&=(\chi_1(\ell)\ell^{a+b+2}+\chi_0(\ell)\ell^c\lambda_\ell)f.
\end{align*}

Therefore $f$ is attached to $\omega\tau\oplus\omega^c\chi_0=\rho$, which has predicted weight $F(a,b,c)$.  Since $f$ is not attached to a direct sum of characters, by Theorem~\ref{d} it not only appears in $E_{0,3}^1$, but also survives into $E_{0,3}^\infty$, and hence appears in $H_3(\Gamma_0(3,N),F(a,b,c)_\chi)$.  We have thus proved the following theorem:

\begin{theorem}\label{main} Let $p>3$ and let $\rho:G_\Q\to\GL(3,\overline{ \mathbb F}_p)$ be a Galois representation of squarefree conductor $N$ and nebentype $\chi$ that decomposes as a sum of a character and an irreducible odd two-dimensional Galois representation $\tau$. Then $\rho$ is attached to a cohomology eigenclass in $H_3(\Gamma_0(3,N),V_\chi)$, where $V$ is the first of the two weights predicted by conjecture~\ref{conjecture}.
\end{theorem}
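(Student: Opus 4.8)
The plan is to produce the claimed eigenclass inside the direct summand $H_3(P_d\cap\Gamma_0(3,N),M)$ of $E^1_{0,3}$ in the spectral sequence of Section~\ref{spec}, with $M=V_\chi$ and $d$ the conductor of the one-dimensional constituent of $\rho$, and then to argue that it survives to $E^\infty$. First I would normalize the data: write $\rho=\sigma\oplus\psi$ with $\sigma$ odd, irreducible, two-dimensional and $\psi$ a character, and set $\tau=\sigma\otimes\omega^{-1}$, so $\rho=(\tau\otimes\omega)\oplus\psi$. Let $F(a,b)$ be the ($p$-restricted) Serre weight of $\tau$, let $N_1$ be its Serre conductor, factor $\det\tau=\omega^{a+b+1}\chi_1$ with $\chi_1$ of conductor dividing $N_1$ and $\psi=\omega^c\chi_0$ with $\chi_0$ of conductor $d$ and $0\le c<p-1$; then $N=N_1d$ is squarefree, $\chi=\chi_0\chi_1$, and the first weight predicted for $\rho$ by Conjecture~\ref{conjecture} is $V=F(a,b,c)$. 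By Serre's conjecture (now a theorem), $\tau$ is attached to a Hecke eigenclass in $H_1(\Gamma_0(2,N_1),F(a,b)_{\chi_1})$; since $\tau$ is absolutely irreducible this class is cuspidal, so by Eichler--Shimura there is an eigenclass $f\in H_1(\Gamma_0(2,N_1)^\pm,F(a,b)_{\chi_1})$ with $f|T_\ell=\lambda_\ell f$, where $\lambda_\ell=\tr(\tau(\frob_\ell))$.

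Next I would set $M=F(a,b,c)_\chi$ and run the Hochschild--Serre spectral sequence of the extension from Theorem~\ref{psid}(4),
$$1\to U_d\cap\Gamma_0(3,N)\to P_d\cap\Gamma_0(3,N)\too{\psi_d^2}\Gamma_0(2,N/d)^\pm\to 1,$$
with coefficients in $M$. Being two columns thick it degenerates at $E^2$, and the relevant corner for $H_3$ is $H_1(\Gamma_0(2,N/d)^\pm,H_2(U_d\cap\Gamma_0(3,N),M))$. Since $U_d\cap\Gamma_0(3,N)$ is free abelian of rank $2$, we have $H_2(U_d\cap\Gamma_0(3,N),M)=H^0(U_d\cap\Gamma_0(3,N),M)=M^{U_d\cap\Gamma_0(3,N)}$, and Theorem~\ref{U-inv} identifies this module, as a $P_d$-module, with $M(c;a,b)_\chi^d$, that is with $F(a,b)$ as a $\GL(2)$-module equipped with the twisted action of Theorem~\ref{U-inv}(3). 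In this way $f$ determines a class in $E^1_{0,3}$ supported on the $\Gamma_0(3,N)$-orbit of $(1:d:0)$.

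Then I would compute the three-dimensional Hecke operators $T(\ell,1)$ and $T(\ell,2)$, $\ell\nmid pN$, on $f$. Here I would use Theorem~\ref{hecke-equ} to reduce the action on the induced module $X_0\otimes M$ to the diagonal operators $T_{yy}$ attached to the $\Gamma_0(3,N)$-orbits of $\PP^2(\Q)$, the coset representatives of Theorem~\ref{cosets}, and the explicit values of $\psi_0^1,\psi_0^2$ given by Theorem~\ref{calcx} after translating each representative into $P_d$ by a suitable $\gamma\in\Gamma_0(3,N)$. Collecting the resulting sums of translates of $f$ and re-expressing them through $T_\ell$, the expected outcome is
$$f|T(\ell,1)=(\ell\lambda_\ell+\chi_0(\ell)\ell^c)\,f,\qquad f|T(\ell,2)=(\chi_1(\ell)\ell^{a+b+2}+\chi_0(\ell)\ell^c\lambda_\ell)\,f,$$
which, together with the values of $T(\ell,0)$ and $T(\ell,3)$ fixed by the central character of $M$, are exactly the Hecke eigenvalues required for $\rho=\omega\tau\oplus\omega^c\chi_0$ to be attached to $f$. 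Finally, since $\tau$ is irreducible $\rho$ is not a sum of characters, so by Theorem~\ref{d} the class of $f$ cannot be hit by any differential of the spectral sequence; as no differential leaves $E^1_{0,3}$, it survives to $E^\infty_{0,3}$, giving an eigenclass in $H_3(\Gamma_0(3,N),V_\chi)$ to which $\rho$ is attached.

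The main obstacle is the explicit Hecke computation of the third step: tracking, via Theorem~\ref{calcx}, the matrix $\gamma$ used to push each coset representative into the parabolic $P_d$, and then collapsing the many translates of $f$ into a single eigenvalue, hinges on the precise normalizations of Sections~\ref{preparing} and~\ref{irreducible} and on the choice of the arithmetic Frobenius.
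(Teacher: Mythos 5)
Your plan follows the paper's own proof essentially step for step: the same normalization $\rho=(\tau\otimes\omega)\oplus\psi$ with $f$ from Serre's conjecture and Eichler--Shimura, the same Hochschild--Serre argument identifying $H_2(U_d\cap\Gamma_0(3,N),M)$ with $M(c;a,b)_\chi^d$ via Theorem~\ref{U-inv}, the same Hecke computation via Theorems~\ref{hecke-equ}, \ref{cosets}, \ref{calcx} yielding $f|T(\ell,1)=(\ell\lambda_\ell+\chi_0(\ell)\ell^c)f$ and $f|T(\ell,2)=(\chi_1(\ell)\ell^{a+b+2}+\chi_0(\ell)\ell^c\lambda_\ell)f$, and the same appeal to Theorem~\ref{d} for survival to $E^\infty_{0,3}$. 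The outline is correct; what remains is only the explicit bookkeeping of the coset representatives and the values of $\psi_0^1,\psi_0^2$, which the paper carries out in detail in Section~\ref{redgal}.
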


We note that the second of the two weights predicted by Conjecture~\ref{conjecture} can also be shown to work.  Let ${}^t\rho^{-1}\otimes\omega^2$ be the twisted contragredient of $\rho$.  This representation will still be a sum of a two-dimensional, odd, irreducible representation and a character, and as such, will be attached to an eigenclass for its first predicted weight of Conjecture~\ref{conjecture}, by Theorem~\ref{main}.  Since Conjecture~\ref{conjecture} is compatible with duality according to the prescription in \cite[Proposition 2.8]{AS}, we find that $\rho$ is attached to an eigenclass in the dual of this weight.  A simple computation shows that this dual is exactly the second predicted weight for $\rho$.  Hence, $\rho$ is attached to eigenclasses in both of the weights described in Conjecture~\ref{conjecture}, concluding the proof that Conjecture~\ref{conjecture} is true for representations of squarefree conductor. 

\section{Appendix: Proof of Theorem~\ref{calcx}}

\begin{proof} To prove case 1, we take $\gamma=I$, since $s$ is already in $P_d$.  Hence $x=g_dsg_d^{-1}$, and we find that $x_{11}=\ell_1$, and $\psi_0^2(x)=\begin{pmatrix}\ell_2&0\cr c-bd&\ell_3\end{pmatrix}$.

For the other cases, note that a matrix is in $P_0g_d$ if and only if it is of the form
$$\begin{pmatrix}r&rd&0\cr *&*&*\cr *&*&*\end{pmatrix}.$$

Since we want
 $$g_ds\gamma=\begin{pmatrix}A(\ell_1+ad)+C\ell_2d&B(\ell_1+ad)+D\ell_2d&0\cr *&*&0\cr *&*&*\end{pmatrix}\in P_0g_d,$$
we see that we must have $B(\ell_1+ad)+D\ell_2d=d(A(\ell_1+ad)+C\ell_2d)$.  Writing this as a matrix equation, we have that
$$(\ell_1+ad,d\ell_2)\begin{pmatrix}A&B\cr C&D\end{pmatrix}=r(1,d)$$ for some $r$.  We will assume that $r>0$ (changing the signs of $A,B,C,D$ if needed).  Since the matrix $\begin{pmatrix}A&B\cr C&D\end{pmatrix}$ must have determinant 1, we get
$$(\ell_1+ad,d\ell_2)=r(1,d)\begin{pmatrix}D&-B\cr -C&A\end{pmatrix}.$$

We now specialize to case 2, in which $\ell_1=\ell$ is prime, $\ell_2=1$ and $a=0$.  Then we have
$$(\ell,d)=r(1,d)\begin{pmatrix}D&-B\cr -C&A\end{pmatrix}.$$
Since $(\ell,d)$ and $(1,d)$ are both primitive, we must have $r=1$.

Multiplying, we see that 
$$\ell=D-Cd,\text{ and }d=-B+Ad$$
Solving for $A$ and $D$ (and using that $B=kN$ for some $k\in\Z$ since $\gamma\in\Gamma_0(N)$) we obtain
$$A=1+\frac{kN}d,\quad B=kN,\quad D=\ell+Cd,$$
and
$$1=AD-BC=\left(1+\frac{kN}d\right)(\ell+Cd)-kNC=\left(\frac{kN}d+1\right)\ell+Cd$$
Now, using the Chinese Remainder Theorem to choose $C$ so that
$$Cd\equiv 1\pmod\ell\text{ and }Cd\equiv 1-\ell\pmod{N/d}$$
we may choose 
$$k=\frac{-Cd-\ell+1}{(N/d)\ell}$$ and we have our desired 
$$\gamma=\begin{pmatrix}A&kN\cr C&D\end{pmatrix}.$$

Note that $x_{11}=r$, and by determinant considerations, we must have $x_{11}x_{22}=\ell_1\ell_2$.  Hence, in case 2, $x_{11}=1$, $x_{22}=\ell$, and a quick calculation shows that
$$\psi_0^2(x)=\begin{pmatrix}\ell&0\cr c\ell-bd&\ell_3\end{pmatrix}.$$

In case 3, we proceed similarly.  We have
$$(1+ad,\ell d)=r(1,d)\begin{pmatrix}D&-B\cr-C&A\end{pmatrix},$$
with both $(1+ad,\ell d)$ and $(1,d)$ primitive, so that $r=1$.  Hence $ad+1=D-Cd$ and $\ell d=-B+Ad$.  We set $B=kN$ and solve for $A=k(N/d)+\ell$ and $D=(ad+1)+Cd$.

Now, using the fact that we want 
$$1=AD-BC=((ad+1)N/d)k+C(\ell d)+(ad+1)\ell,$$
and the fact that $((ad+1)N/d,\ell d)=1$, we see that integers $C$ and $k$ exist, so $\gamma$ exists.  We compute $x$ and obtain $\psi_0^1(x)=r=1$ and $\psi_0^2(x)=\begin{pmatrix}\ell&0\cr c(ad+1)-b\ell d&\ell_3\end{pmatrix} $.

Finally, for case 4, we again proceed in a similar fashion.  We find that
$$(1+ad,\ell d)=r(1,d)\begin{pmatrix}D&-B\cr-C&A\end{pmatrix}$$

Under the assumption that a $\gamma$ exists, $r=\ell$ (since $(1,d)$ is primitive, but the GCD $(1+ad,\ell d)=\ell$).  Hence, computing $x$ using the values for $A,B,C,D$ below, we find that $\psi_0(x)=\ell$ and $\psi_0^2(x)=\begin{pmatrix}1&0\cr c\left(\frac{1+ad}\ell\right)-bd&\ell_3\end{pmatrix}$.

To show the existence of $\gamma$, we note that $\ell(D-Cd)=1+ad$ and $\ell(-B+Ad)=\ell d$.  Setting $B=kN$ and solving, we find that
$$A=1+k\frac{N}d,\quad B=kN,\quad D=\frac{1+ad}\ell+Cd.$$
Then 
$$1=AD-BC=\frac{1+ad}\ell+k\left(\frac Nd\frac{(1+ad)}\ell\right)+Cd.$$
Since $d$ and $\left(\frac Nd\frac{(1+ad)}\ell\right)$ are relatively prime, a solution exists for $k$ and $C$.
\end{proof}

\end{document}